\theoremstyle{plain}
\newtheorem{thm}{Theorem}[section]
\newtheorem{lm}[thm]{Lemma}
\newtheorem{prop}[thm]{Proposition}
\newtheorem{prb}[thm]{Problem}
\theoremstyle{definition}
\newtheorem{ex}[thm]{Example}
\newtheorem{re}[thm]{Remark}
\newcommand{\RR}{{\mathbb R}}
\newcommand{\ZZ}{{\mathbb Z}}
\newcommand{\NN}{{\mathbb N}}
\newcommand{\PP}{{\mathbb P}}
\renewcommand{\AA}{{\mathbb A}}
\newcommand{\Rb}{\RR_\infty}
\newcommand{\Trop}{\operatorname{Trop}}
\newcommand{\trop}{\operatorname{trop}}
{\begin{figure} \begin{center}}%
{\end{center} \end{figure}}
\newcommand{\id}{\operatorname{id}}
\newcommand{\ini}{\mathrm{in}}
\newcommand{\one}{{\mathbf 1}}
\newcommand{\an}{\mathrm{an}}
\newcommand{\Xan}{X^{\an}}
\newcommand{\Yan}{Y^{\an}}
\newcommand{\Gm}{\mathbb{G}_{\mathrm{m}}}
\renewcommand{\phi}{\varphi}
\newcommand{\A}{{\mathbb A}}
\newcommand{\G}{{\mathbb G}}
\newcommand{\R}{{\mathbb R}}
\renewcommand{\hat}[1]{\widehat{#1}}
\newcommand{\cGr}{\widehat{\mathrm{Gr}}}
\newcommand{\im}{\mathrm{im}}
\begin{document}

\title{Faithful tropicalisation and torus actions}

\author[J.~Draisma]{Jan Draisma}
\address[Jan Draisma]{
Department of Mathematics and Computer Science\\
Technische Universiteit Eindhoven\\
P.O. Box 513, 5600 MB Eindhoven, The Netherlands;
and Centrum voor Wiskunde en Informatica, Amsterdam,
The Netherlands}
\thanks{JD is supported by a Vidi grant from
the Netherlands Organisation for Scientific Research (NWO)}
\email{j.draisma@tue.nl}

\author[E.~Postinghel]{Elisa Postinghel}
\address[Elisa Postinghel]{
Department of Mathematics\\
Katholieke Universiteit Leuven\\
Celestijnenlaan 200b - box 2400, 3001 Leuven, Belgium
}
\thanks{EP is supported by the Research Foundation - Flanders (FWO)}
\email{elisa.postinghel@wis.kuleuven.be}

\maketitle

\begin{abstract}
For any affine variety equipped with coordinates, there is a surjective,
continuous map from its Berkovich space to its tropicalisation.
Exploiting torus actions, we develop techniques for finding an explicit,
continuous section of this map. In particular, we prove that such a
section exists for linear spaces, Grassmannians of planes (reproving a
result due to Cueto, H\"abich, and Werner), matrix varieties defined by
the vanishing of $3 \times 3$-minors, and for the hypersurface defined
by Cayley's hyperdeterminant.

\end{abstract}

\section{Introduction} \label{sec:Intro}
Let $K$ be a field with a non-Archimedean valuation $v:K \to
\Rb:=\RR \cup \{\infty\}$, let $\AA^n \supseteq \Gm^n$ be the
$n$-dimensional affine space over $K$ and the $n$-dimensional torus with
coordinates $x_1,\ldots,x_n$, respectively, and let $\PP^{n-1}$ be the
$(n-1)$-dimensional projective space over $K$ with
homogeneous coordinates $x_1,\ldots,x_n$. For a closed subvariety
$X$ of $\Gm^n$ or $\AA^n$ or $\PP^{n-1}$, defined over $K$, we write $\Trop(X)$ for
the tropicalisation of $X$ sitting inside $\RR^n$ or $\Rb^n$ or $(\Rb^n \setminus
\{(\infty,\ldots,\infty)\})/\RR(1,\ldots,1)$, respectively.

Write $\Xan$ for the analytification of $X$ in Berkovich's sense
\cite[Chapter 1]{Berkovich90}. We work with the negative logarithms
of multiplicative seminorms, so in the affine case $\Xan$ is the
set of all ring valuations $K[X] \to \Rb$ extending $v$, equipped
with the topology of pointwise convergence. In particular, $\Xan$
is a Hausdorff topological space, and a sequence of $w_1,w_2,\ldots$
in $\Xan$ converges if and only if the sequence $w_1(f),w_2(f),\ldots$
converges in $\Rb$ (with the topology of a half-open
interval) for each $f \in K[X]$.  Write $\infty$ for the valuation of
$K[\AA^n]=K[x_1,\ldots,x_n]$ that maps a polynomial to the valuation of
its constant term. In the projective case, let $\hat{X} \subseteq \AA^n$
be the affine cone over $X$. Then, as a topological space, $\Xan$ equals
$\hat{X}^{\an} \setminus \{\infty\}$ modulo the equivalence relation
under which $w_1$ and $w_2$ are equivalent if and only if there exists a
constant $C \in \RR$ such that for each degree-$d$ homogeneous polynomial
$f$ in the graded coordinate ring $K[\hat{X}]$ we have $w_1(f)=dC+w_2(f)$
(see \cite[Chapter 2]{Baker10} for the case of the projective line).

There is a continuous surjection 
\[ \trop:\Xan \to \Trop(X),\ w \mapsto (w(x_1),\ldots,w(x_n)). \] 
This can be taken either as a definition of $\Trop(X)$
or as a theorem when other de\-fi\-nitions are chosen
\cite{Einsiedler06,Payne09,Payne07,Draisma06a}. Indeed, in \cite{Payne09} it
is proved that $\Xan$ is the projective limit of the tropicalisations
$\Trop(X)$ for all choices of coordinates. The tropicalisation is the
support of a finite polyhedral complex by \cite{Bieri84}.

In this paper we discuss a number of high-dimensional examples where
$\trop$ has a continuous section. The results are motivated by exciting
recent work for Grassmannians of planes \cite{Cueto14} and for curves
\cite{Baker11}. In particular, we will give another, more geometric
proof of the main result of \cite{Cueto14} that Grassmannians of planes
admit such a section. In the very recent paper \cite{Gubler14} (written
concurrently with our paper) it is proved that, if $X$ is a subvariety
of $\Gm^n$, then a section exists on the locus in $\Trop(X)$ where
the tropical multiplicity equals one \cite{Gubler14}. This beautiful
general theorem implies parts of our results, e.g. for linear spaces.
The emphasis in our paper, however, is on explicit sections in concrete
examples, and in several of these we also extend the section to the
part of $\Trop(X)$ outside $\RR^n$. 

Throughout, we will assume that the valuation $K \to \Rb$ is
surjective. This is no restriction for our purposes. Indeed, $(K,v)$
always embeds into a valued field $(L,v_L)$ with $v_L$ surjective. This
does not change $\Trop(X)$, and a suitable section $\Trop(X) \to\Xan_L$
can be composed with the restriction map $\Xan_L \to \Xan$ to obtain a
section $\Trop(X) \to \Xan$.

We will use the following notation and facts. Given a point $\xi \in
\Rb^n$ we write
\[ K[x]_\xi:=\left\{\sum_{\alpha \in A} c_\alpha x^\alpha \mid 
A \subseteq \NN^n \text{ finite and } v(c_\alpha) + \alpha
\cdot \xi \geq 0 \text{ for all } \alpha \in A\right\} \]
for the {\em tilted group ring} \cite{Payne07}. This ring contains the
valuation ring of $K$ and it has an ideal with the same definition but
with $\geq$ replaced by $>$. The quotient by this ideal is an algebra
over the residue field $k$ of $K$. By surjectivity of the valuation,
this algebra is in fact a polynomial ring over $k$ in at most $n$
variables---generators can be obtained as the images $y_i$ of $c_i x_i$
where $i$ ranges throught the set where $\xi_i \neq \infty$ and where
the coefficients $c_i \in K$ are chosen such that $v(c_i)+\xi_i=0$. Let
$I(X)$ be the ideal of $X$ in $K[x]$.  The image of $I(X) \cap K[x]_\xi$
in the polynomial ring $k[y_i \mid i:\xi_i \neq \infty]$ is called the
{\em initial ideal} $\ini_\xi I(X)$ of $I(X)$ relative to $\xi$, and the
scheme $\ini_\xi X$ defined by it is called the {\em initial degeneration}
of $X$. The point $\xi$ lies in the tropical variety if and only if
$\ini_\xi I(X)$ does not contain monomials \cite[Chapter 3]{Maclagan12}.

The remainder of this paper is organised as follows. In
Section~\ref{sec:Linear} we prove that if $Y \subseteq \AA^n$ is a
linear space, then the surjection $\Yan \to \Trop(Y)$ has a continuous
section. In Section~\ref{sec:Torus}, given an action of an $m$-dimensional
subtorus of $\Gm^n$ on a subvariety $X \subseteq \AA^n$, we construct an
action of $\RR^m$ on a retract $Z \subseteq \Xan$, which maps surjectively
and $\RR^m$-equivariantly onto $\Trop(X)$. In Section~\ref{sec:Smearing}
we introduce techniques for finding sections $\Trop(X) \to Z$ when
$X$ is obtained by smearing around a linear space $Y$ with a torus
action. As an example, we treat the variety in $\Gm^{m \times p}$
of matrices of less than full rank, where we show that a continuous
section exists at least over a large open subset of the tropicalisation.
In Sections~\ref{sec:Grassmannian} and~\ref{sec:RankTwo} we apply our
techniques to Grassmannians of two-spaces and to matrices of rank two,
respectively. We conclude with a brief discussion of $A$-discriminants
in Section~\ref{sec:ADiscriminants}.

\subsection*{Acknowledgments}
We thank Joe Rabinoff for interesting discussions and, more specifically,
for pointing out that the retract $Z$ of $\Xan$ that we define in
Section~\ref{sec:Torus} is, in fact, a strong deformation retract. We
also thank a referee for pointing out an error in an earlier version of
this paper concerning the variety of matrices of less than full rank.

\section{Linear spaces} \label{sec:Linear}

In this section we assume that $Y$ is a linear subspace through the
origin $0 \in \AA^n$. Tropical linear spaces are well-understood through
their circuits and cocircuits \cite{Yu07,Ardila06}, and the proof of
the following theorem is very natural from that perspective.

\begin{thm} \label{thm:Linear}
For any linear subspace $Y \subseteq \AA^n$ the projection
$\trop_Y: \Yan \to \Trop(Y)$ has a continuous section.
\end{thm}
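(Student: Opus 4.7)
My plan is to construct, for each $\xi \in \Trop(Y)$, an explicit valuation $w_\xi$ on $K[Y]$ by pushing down the $\xi$-weighted monomial valuation from the ambient polynomial ring, and then to verify that $\xi \mapsto w_\xi$ is continuous. For $\bar f \in K[Y] = K[x_1,\ldots,x_n]/I(Y)$ I set
\[ w_\xi(\bar f) := \sup\bigl\{\, v_\xi(g) \mid g \in K[x],\ g - f \in I(Y) \,\bigr\}, \quad v_\xi\Bigl(\sum_\alpha c_\alpha x^\alpha\Bigr) := \min_\alpha\bigl(v(c_\alpha) + \alpha \cdot \xi\bigr). \]
This is automatically a pre-valuation; the work lies in showing it is an honest ring valuation landing in $\trop^{-1}(\xi)$ and depending continuously on $\xi$.

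A short preliminary reduces to $\xi \in \Trop(Y) \cap \RR^n$: if $T = \{i : \xi_i = \infty\}$, then any valuation $w$ compatible with $\xi$ has $w(\bar x_i) = \infty$ for $i \in T$ and hence factors through $K[Y_T]$, where $Y_T := Y \cap \bigcap_{i \in T}\{x_i=0\}$ is again a linear subspace, now of $\AA^{[n]\setminus T}$. The corresponding tropical reduction $\xi|_{[n]\setminus T} \in \Trop(Y_T)$ follows from the standard compatibility of tropicalisation with coordinate subspaces for linear spaces (matroid contraction), so I may induct on $n$.

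For finite $\xi$, I would exploit the matroid structure of $Y$. The key input is that $I(Y)$ is generated by the linear \emph{circuit-forms} $\ell_C$ of the matroid $M(Y)$, and $\xi \in \Trop(Y)$ forces the minimum in $v_\xi(\ell_C)$ to be attained at least twice for every circuit $C$. Consequently, every initial form $\ini_\xi \ell_C$ is itself a non-monomial linear form over the residue field $k$, so $\ini_\xi I(Y)$ is generated by linear forms; in particular it is prime and contains no monomial. This primality powers multiplicativity: every class $\bar f \neq 0$ admits a \emph{$\xi$-normal} representative $f_0$ whose initial form is nonzero modulo $\ini_\xi I(Y)$ (obtained by a standard weight-reduction procedure), so the sup is attained and $w_\xi(\bar f) = v_\xi(f_0)$; and $\ini_\xi(f_0 g_0) = \ini_\xi(f_0)\ini_\xi(g_0)$ is nonzero modulo a prime ideal, yielding $w_\xi(\bar f \bar g) = w_\xi(\bar f) + w_\xi(\bar g)$. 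The identity $w_\xi(\bar x_i) = \xi_i$ is then automatic (else $y_i$ would lie in $\ini_\xi I(Y)$). Continuity of $\xi \mapsto w_\xi(\bar f)$ follows from the Gröbner-fan description: on each cone of a refinement of this fan the $\xi$-normal representative $f_0$ can be chosen uniformly, so $\xi \mapsto v_\xi(f_0)$ is piecewise a minimum of finitely many affine functions of $\xi$, and these pieces glue along cone walls into a continuous function.

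I expect the main obstacle to be the classical but crucial statement underlying the third paragraph: that the circuit-forms of a linear space form a \emph{tropical basis} of $I(Y)$ in the sense of \cite{Maclagan12}, so that $\ini_\xi I(Y)$ is truly generated by their (linear) initial forms. This is the combinatorial input, traceable back to \cite{Ardila06, Yu07}, that makes tropical linear spaces behave so much better than generic tropical varieties, and everything else in the argument ultimately rests on it.
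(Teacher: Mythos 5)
Your $w_\xi$ is, pointwise, the same object as the paper's section: the supremum over representatives of the $\xi$-weighted Gauss valuation is exactly the Shilov boundary point of the fibre $\trop_Y^{-1}(\xi)$, which the paper realises explicitly by choosing a basis $J$ of the matroid of $Y$ compatible with $\xi$ and transporting the Gauss valuation through the isomorphism $K[x_j \mid j \in J] \cong K[Y']$. Your route through initial ideals is viable on $\Trop(Y)\cap\RR^n$, but note two imprecisions. First, what you need is not that the circuits form a \emph{tropical basis} in the sense of Maclagan--Sturmfels (that only controls the underlying set $\Trop(Y)$), but the stronger classical fact that the circuits of a linear ideal form a \emph{universal Gr\"obner basis}, so that $\ini_\xi I(Y)$ is literally generated by the linear forms $\ini_\xi \ell_C$; equivalently, that $\ini_\xi Y$ is again a linear space of the same dimension, which is what the paper invokes. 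Second, attainment of the supremum (existence of a $\xi$-normal representative, i.e.\ termination of weight-reduction over a valued field with dense value group) is asserted rather than proved; the cheapest repair is precisely the paper's device, since a representative supported on a basis compatible with $\xi$ is automatically normal and realises the supremum.

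The genuine gap is continuity on $\Trop(Y)\setminus\RR^n$. Your reduction to finite $\xi$ only \emph{defines} the section at boundary points (via $Y_T$ and induction); it does not address what happens when a sequence of points of $\Trop(Y)\cap\RR^n$ (or of a smaller stratum) converges to a point with more infinite coordinates. Your continuity argument lives entirely inside $\RR^n$: the Gr\"obner complex and its cones do not see the boundary strata of $\Rb^n$, and the sup-of-continuous-functions structure of $w_\xi$ gives lower semicontinuity for free but not the reverse inequality at boundary points, which is exactly what has to be checked. This is the point the paper flags explicitly (``we will need to check carefully that the section we construct is also continuous on $\Trop(Y)\setminus\RR^n$'') and handles by fixing $S$ and a compatible basis $J$, observing that the explicit formula persists on the \emph{closure} of the corresponding compatibility region --- the circuit identity \eqref{eq:min} survives in the limit even when coordinates indexed by $J$ become infinite --- and then covering $\Trop(Y)$ by finitely many such closed sets. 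Without an argument of this kind your proposal proves the theorem only over $\Trop(Y)\cap\RR^n$, which (as the paper remarks) also follows from the general multiplicity-one theorem of Gubler--Rabinoff--Werner; the content you would be missing is precisely the extension of the continuous section across the strata at infinity.
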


Without loss of generality, we may restrict to the case where $Y$ is
not contained in any coordinate hyperplane, so that $\Trop(Y)$ is the
closure of $\Trop(Y) \cap \RR^n$. Nevertheless, we will need to check
carefully that the section we construct is also continuous on $\Trop(Y)
\setminus \RR^n$. We will use that for $\eta \in \Trop(Y) \cap \RR^n$
the initial degeneration $\ini_\eta Y$ is a linear subspace of $\AA_k^n$
of the same dimension as $Y$. For general $\eta \in \Trop(Y)$ we have
$\ini_\eta Y=\ini_\eta Y'$, where $Y'$ is the subspace of $Y$ consisting
of all $y$ with $x_i(y)=0$ for all $i$ with $\eta_i=\infty$.

The $K$-space $Y'$ defines a matroid on $[n]$ by declaring a subset $J
\subseteq [n]$ independent if the restrictions $(x_j)|_{Y'},\ j \in J$
are $K$-linearly independent. Similarly, the $k$-space $\ini_\eta Y'$
also defines a matroid on $[n]$, by declaring $J$ independent if the
restrictions of the $y_j, j \in J$ (from the definition of the tilted
polynomial ring) to $\ini_\eta Y'$ are $k$-linearly independent. The two
matroids have the same rank, and any basis of the latter matroid is also
a basis of the former matroid. Throughout the paper, these distinguished
bases of the former matroid will be called {\em compatible with $\eta$}
(and, conversely, $\eta$ with those bases).

\begin{proof}[Proof of Theorem~\ref{thm:Linear}.]
We define the section $\sigma:\Trop(Y) \to \Yan$ as follows. Pick
$\eta \in \Trop(Y)$, set $S:=\{i \in [n] \mid \eta_i=\infty\}$, and
let $Y' \subseteq Y$ be the subspace of all $y \in Y$ with $x_i(y)=0$
for all $i \in S$. Let $J$ be a basis of the matroid defined by $Y'$
that is compatible with $\eta$. In particular, $J$ is disjoint from $S$.
The inclusion $K[x_j \mid j \in J] \to K[Y']$ is an isomorphism, so for
$f \in K[Y]$ we can uniquely write $f|_{Y'}=\sum_\alpha c_\alpha x^\alpha$
where the $\alpha$ run through $\NN^J$. We set
\[ \sigma(\eta)(f):=\min_{\alpha}(v(c_\alpha) + \alpha \cdot
\eta). \]
This is clearly a valuation that maps $x_j, j \in J$ to $\eta_j$ and
that maps the $x_i$ with $i \in S$ to $\infty$. What about $x_i$ with $i \not \in J \cup S$? Up to
a scalar factor, there exists a unique non-zero linear relation
\[ \sum_{j \in J \cup \{i\}} d_j x_j  \in I(Y'). \]
After scaling we may assume that $v(d_j) + \eta_j \geq
0$ for all $j \in J \cup \{i\}$ and that equality holds for at least one
$j$. Then this element lies in $I(Y') \cap K[x_j \mid j \not
\in S]_\eta$. If $v(d_i)+\eta_i$
were strictly positive, then projecting down into $k[y_j
\mid j \not \in S]$ would yield a relation among the $y_j$ with $j \in J$, a contradiction
to the choice of $J$. Hence $v(d_i)+\eta_i=0$. If $v(d_j)+\eta_j$ were
strictly positive for all $j \in J$, then projecting down would yield
$y_i \in \ini_\eta I(Y')$, which contradicts $\eta \in
\Trop(Y')$. Hence
$v(d_j)+\eta_j=0$ for some $j \in J$. This shows that
\[ \sigma(\eta)(x_i)=\min_{j \in J} (v(-d_j/d_i) + \eta_j) =
\eta_i, \]
as required. So $\sigma(\eta) \in \Yan$ is a point in the fibre of $\trop_Y$
above $\eta$.

To define $\sigma(\eta)$, we have made the choice of a basis $J$ in
the matroid defined by $\ini_\eta I(Y')$. But in fact, this choice
does not influence the outcome. Indeed, {\em any} valuation $w \in
\Yan$ with $\trop_Y(w)=\eta$ must satisfy $w(f) \geq \sigma(\eta)(f)$
for all $f \in K[Y]$\footnote{So $\sigma(\eta)$ is the {\em
Shilov boundary point} \cite[Chapter 2]{Berkovich90} in the fibre in
$\Yan$ above $\eta$.}. In particular, this must hold for all valuations
constructed from other bases of the matroid.  This shows that $\sigma$
is well-defined on all of $\Trop(Y)$.

It remains to show that $\sigma$ is continuous. This is immediate from the
formula for $\sigma(\eta)$ on a subset of $\Trop(Y)$ where $S$ and $J$
compatible with $\eta$ are fixed. Let $Y'$ be as above. Suppose that a
sequence $\eta^{(l)},l=1,2,\ldots$ in this set converges to a point $\eta
\in \Trop(Y)$. Note that the set of $i$ with $\eta_i=\infty$ contains
$S$ but may be strictly larger, and may even contain elements of
$J$. Even so, for every non-zero relation
$\sum_{j \in J \cup \{i\}} d_j x_j \in I(Y')$ for $i \not \in J \cup S$
we have $\min_{j \in J} (v(d_j)+\eta^{(l)}_j) = v(d_i)+\eta^{(l)}_i$. This
closed condition then also holds in the limit:
\begin{equation} \label{eq:min}
\min_{j \in J}
(v(d_j)+\eta_j)=v(d_i)+\eta_i.
\end{equation} 
Let $w$ be the valuation of $K[Y]$
defined by mapping $f \in K[Y]$ with $f|_{Y'}=\sum_{\alpha \in \NN^J}
c_\alpha x^\alpha$ to
$\min_{\alpha \in \NN^J} (v(c_\alpha)+\alpha \cdot \eta)$.
Then $w$ is, indeed, a valuation of $K[Y]$, which maps $x_j$ to $\eta_j$
for $j \in J$ (because $x_j|_{Y'}$ is a single term) and for $j \in
S$ (because $x_j|_{Y'}$ has no terms) and for $j \not \in J \cup S$
(by~\eqref{eq:min}). Moreover, $w(f)$ is minimal among all such
valuations, so $w(f)=\sigma(\eta)(f)$. This shows that $\sigma$ is
continuous on the closure of the set of all $\eta$ compatible with a given
$S$ and $J$. These closures form a finite closed cover of $\Trop(Y)$,
hence $\sigma$ is continuous everywhere.
\end{proof}

\begin{re} \label{re:constant}
In the {\em constant coefficient case}, where $Y$ is a linear space
defined over a subfield of $K$ on which the valuation is trivial, 
the choice of $J$ above can be made more constructive, as follows.
Given $\eta \in \Trop(Y) \cap \RR^n$, take a permutation $\pi \in S_n$ such that
$\eta_{\pi(1)} \geq \ldots \geq \eta_{\pi(n)}.$
Then construct $J$ by setting $J_0:=\emptyset$ and 
\[ J_{i} := \begin{cases}
J_{i-1} \cup \{i\} &\text{ if $x_i|_Y$ linearly
independent of $\langle x_j|_Y \mid j \in J_{i-1} \rangle$,
and}\\
J_{i-1} &\text{ otherwise.}
\end{cases}
\]
Then $J:=J_n$ is a basis of the matroid defined by $Y$ compatible with
$\eta$. This is the greedy algorithm for finding a maximal-weight basis
in a matroid \cite[Chapter 40]{Schrijver03}.

Conversely, given a basis $J$ of that matroid, we can construct all $\eta
\in \Trop(Y) \cap \RR^n$ compatible with it by choosing the $\eta_j$
with $j \in J$ arbitrarily and setting $\eta_i$ for $i \not \in J$
equal to the minimal value $\eta_j$ for $j$ in the unique circuit
contained in $J \cup \{i\}$.  We will use this explicit construction in
Sections~\ref{sec:Grassmannian} and~\ref{sec:RankTwo}. These
remarks apply, {\em mutatis mutandis}, also to $\eta \in
\Trop(Y) \setminus \RR^n$. 
\end{re}

\section{Torus actions} \label{sec:Torus}

Let $\phi:\Gm^m \to \Gm^n$ be a homomorphism of tori. This is of the form
$\phi(t_1,\ldots,t_m)=(t^{a_1},\ldots,t^{a_n})$ where $a_1,\ldots,a_n
\in \ZZ^m$. Let $A \in \ZZ^{n \times m}$ be the matrix with rows
$a_1,\ldots,a_m$. Let $X \subseteq \AA^n$ or $X \subseteq \Gm^n$ be
a closed affine subvariety stable under the $\Gm^m$-action on $\AA^n$
(or $\Gm^n$) given by $\phi$. Then $\RR^m$ has a continuous action on
$\Trop(X)$ given by
\[ (\tau,\xi) \mapsto A\tau + \xi. \]
The column space of $A$ is contained in the {\em lineality space} of
$\Trop(X)$. In this section we investigate to what extent this action
can be lifted to $\Xan$. For this, we denote by $\lambda: \Gm^m \times
X \to X, (t,x) \mapsto \phi(t)x$ the action of $\Gm^m$ on $X$ and by
$\lambda^*:K[X] \to K[\Gm^m \times X]$ its comorphism.
\begin{lm}
There exists a commutative diagram of continuous maps:
\[ 
\xymatrixcolsep{5pc}
\xymatrix{
\RR^m \times \Xan \ar[r] \ar[d]^{\id \times \trop}  & (\Gm^m \times X)^\an
\ar[r]_{w \mapsto w \circ \lambda^* } & \Xan \ar[d]_\trop\\
\RR^m \times \Trop(X) \ar[rr]^{(\tau,\xi) \mapsto A \tau + \xi
} &&  \Trop(X).
}
\]
\end{lm}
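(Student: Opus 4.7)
The unnamed top-left arrow, call it $\Phi$, should send a pair $(\tau, w) \in \RR^m \times \Xan$ to a Gauss-type extension of $w$ to the ring $K[\Gm^m \times X] = K[X][t_1^{\pm 1},\ldots,t_m^{\pm 1}]$. Writing any element of this ring uniquely as a finite sum $f = \sum_{\alpha \in \ZZ^m} t^\alpha g_\alpha$ with $g_\alpha \in K[X]$, I would set
\[ \Phi(\tau, w)(f) := \min_{\alpha} \bigl( \alpha \cdot \tau + w(g_\alpha) \bigr). \]
The first task is then to verify that $\Phi(\tau, w)$ is a valuation of $K[\Gm^m \times X]$ extending $v$. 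That it extends $v$ and satisfies the non-Archimedean inequality follows directly from the corresponding properties of $w$.

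The main obstacle is multiplicativity, which amounts to a Gauss-lemma argument. Given $f = \sum t^\alpha g_\alpha$ and $f' = \sum t^\beta g'_\beta$, I would choose among the minimisers of $\alpha \cdot \tau + w(g_\alpha)$ the one, $\alpha^*$, that is lexicographically smallest, and similarly $\beta^*$ for $f'$. For any other pair $(\alpha,\beta)$ with $\alpha + \beta = \alpha^* + \beta^*$, lex-minimality forces at least one of $\alpha,\beta$ to fail to be a minimiser, so that $w(g_\alpha g'_\beta) > w(g_{\alpha^*}) + w(g'_{\beta^*})$. The ultrametric inequality then forces the $w$-value of the coefficient of $t^{\alpha^*+\beta^*}$ in $ff'$ to equal $w(g_{\alpha^*}) + w(g'_{\beta^*})$, and an analogous inequality at every other exponent $\gamma$ yields $\Phi(\tau,w)(ff') = \Phi(\tau,w)(f) + \Phi(\tau,w)(f')$. (Reducibility of $K[X]$ is handled by first passing to the quotient by the prime kernel of $w$, where $w$ becomes a genuine valuation on a domain.)

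Continuity of $\Phi$ is then immediate from the defining formula: for fixed $f$, the map $(\tau,w) \mapsto \Phi(\tau,w)(f)$ is the minimum of the finitely many continuous functions $(\tau,w) \mapsto \alpha \cdot \tau + w(g_\alpha)$, and by definition of the topology on $(\Gm^m \times X)^{\an}$ this suffices. Continuity of the middle arrow $w' \mapsto w' \circ \lambda^*$ is tautological, since for each $h \in K[X]$ the map $w' \mapsto w'(\lambda^*(h))$ is continuous. Finally, commutativity of the diagram reduces to a check on the coordinate functions: because $\lambda^*(x_i) = t^{a_i} x_i$ in $K[\Gm^m \times X]$,
\[ \bigl(\Phi(\tau,w) \circ \lambda^*\bigr)(x_i) = \Phi(\tau,w)(t^{a_i} x_i) = a_i \cdot \tau + w(x_i), \]
whose $i$-th tropical coordinate is $(A\tau + \trop(w))_i$, matching the bottom arrow. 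The only genuinely nontrivial step in the plan is the Gauss-lemma multiplicativity; everything else is formal.
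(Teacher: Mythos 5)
Your construction is exactly the one in the paper: the left-hand arrow is the Gauss-type extension $(\tau,w)\mapsto\bigl(\sum_\beta f_\beta t^\beta\mapsto\min_\beta(w(f_\beta)+\beta\cdot\tau)\bigr)$, continuity is argued via pointwise minima of finitely many continuous functions together with the definition of the topology on $(\Gm^m\times X)^\an$, and commutativity is checked on the coordinates using $\lambda^*x_i=t^{a_i}x_i$. The only difference is that you spell out the Gauss-lemma multiplicativity (correctly, via the lex-minimal minimiser), a verification the paper leaves implicit, so the two proofs are essentially the same.
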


\begin{proof}
The right-most map in the top row of the diagram is the analytification
of the torus action, hence in particular continuous. The only map that
needs a definition is the left-most map in that row. It sends $(\tau,w)$
to the valuation of $K[\Gm^m \times X]$ defined by
\[ \sum_{\beta \in \ZZ^m} f_\beta t^\beta \mapsto
\min_\beta (w(f_\beta) + \beta \cdot \tau). \]
For each fixed element $\sum_{\beta \in \ZZ^m} f_\beta t^\beta$ of
$K[\Gm^m \times X]$ the right-hand side is continuous in $(\tau,w)$
(this uses the definition of the topology of $\Xan$ and the fact that a
point-wise minimum of continuous functions is continuous).  By definition
of the topology of $(\Gm^m \times X)^\an$, this implies that the map
is continuous.

To see that the diagram commutes, pick $(\tau,w) \in \RR^m \times \Xan$
and let $w' \in \Xan$ be the image of that pair along the top row. We have
$\lambda^* x_i=t^{a_i} x_i$, and hence 
\[ w'(x_i)=w(x_i) + a_i \cdot \tau. \]
This implies that $\trop(w')=\trop(w)+A \tau$, as claimed.
\end{proof}

Let $\mu$ denote the composition of the two maps in the first
row. Unwinding the definitions, we find that $\mu$ sends $(\tau,w)$ to a
valuation on $K[X]$ defined as follows. Pick $f \in K[X]$ and decompose
$f=\sum_{\beta \in \ZZ^m} f_\beta$ into $\Gm^m$-weight vectors, i.e.,
with $\lambda^* f_\beta=t^\beta f_\beta$. Then
\[ \mu(\tau,w)(f)=\min_{\beta} (w(f_\beta)+ \beta \cdot \tau). \]
We remark that if $A\tau=0$, then $\mu(\tau,w)=\mu(0,w)$. Indeed, then
$\tau$ is perpendicular to the rows of $A$, hence to any $\ZZ$-linear
combination of these, and the $\beta$ for which there exist non-zero
$f_\beta \in K[X]$ of weight $\beta$ are such linear combinations.

In general, $\mu$ is not an action of $\RR^m$ on $\Xan$. Indeed, while
the valuations $\mu(0,w)$ and $w$ do agree on monomials, they do not
need to agree on other functions. For an explicit example, set $X=\AA^2$
with coordinate ring $K[x_1,x_2]$, let $m=2$, and let $\phi$
be the identity. Define $w \in \Xan$ by $w(f):=v(f(1,1))$, so that
$w(x_1)=w(x_2)=0$. Then the image of $(0,w)$ along the first row equals
the ``Gauss point'' $w'$ of $K[x_1,x_2]$ defined by
\[ \sum_{i,j} c_{ij} x_1^i x_2^j \mapsto \min_{i,j} v(c_{ij}). \]
Then we have $w(x_1-1)=\infty\neq 0=w'(x_1-1)$. However, the following
lemma shows that $\mu(0,w) \neq w$ is the {\em only} obstacle to $\mu$
being an action.

\begin{lm}
Define $Z$ as the image of $\mu$. Then $Z$ is a closed subset of $\Xan$
and the restriction of $\mu$ to $\RR^m \times Z$ defines a continuous
action of $\RR^m$ on $Z$. Moreover, the map $w \mapsto \mu(0,w)$ defines
a continuous retraction from $\Xan$ to $Z$.
\end{lm}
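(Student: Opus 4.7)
The plan is to introduce the candidate retraction $r : \Xan \to \Xan$ defined by $r(w) := \mu(0,w)$, so that $r(w)(f) = \min_\beta w(f_\beta)$ where $f = \sum_\beta f_\beta$ is the decomposition into $\Gm^m$-weight vectors. The strategy is to prove that $r$ is an idempotent continuous map whose image coincides with $Z = \im(\mu)$, and then to read off all four claims of the lemma from this.

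The central observation, used throughout, is that a weight vector $f_\beta$ has trivial weight decomposition: its only non-zero weight component is $f_\beta$ itself, of weight $\beta$. Using this, I would first verify idempotence of $r$ by the direct calculation $r(r(w))(f) = \min_\beta r(w)(f_\beta) = \min_\beta w(f_\beta) = r(w)(f)$. The same observation shows that for arbitrary $\tau$ the valuation $\mu(\tau,w)$ is a fixed point of $r$, since $r(\mu(\tau,w))(f) = \min_\beta \mu(\tau,w)(f_\beta) = \min_\beta(w(f_\beta) + \beta\cdot\tau) = \mu(\tau,w)(f)$. Conversely, any fixed point $w$ of $r$ lies in $\im(\mu)$, because $w = r(w) = \mu(0,w)$. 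Thus $Z = \im(\mu) = \im(r)$ coincides with the fixed-point set of $r$, and $r$ restricted to $Z$ is the identity.

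From this identification closedness of $Z$ is automatic: since $\Xan$ is Hausdorff and $r$ is continuous, the fixed-point set $\{w : r(w) = w\}$ is closed, being the preimage of the diagonal in $\Xan \times \Xan$ under the continuous map $(r,\id)$. The continuity of $r$ itself is immediate from the continuity of $\mu$ established in the previous lemma, restricted to $\{0\} \times \Xan$; alternatively, for each fixed $f \in K[X]$ the assignment $w \mapsto \min_\beta w(f_\beta)$ is a pointwise minimum of finitely many continuous functions on $\Xan$, hence continuous.

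To check that the restriction of $\mu$ to $\RR^m \times Z$ is a continuous group action, I would verify the two axioms. The identity axiom $\mu(0,w) = w$ for $w \in Z$ holds by the fixed-point description of $Z$. For additivity, once more using the triviality of the weight decomposition of a weight vector, one computes
\[
\mu(\tau_1,\mu(\tau_2,w))(f) = \min_\beta\bigl(\mu(\tau_2,w)(f_\beta) + \beta\cdot\tau_1\bigr) = \min_\beta\bigl(w(f_\beta) + \beta\cdot(\tau_1+\tau_2)\bigr) = \mu(\tau_1+\tau_2,w)(f).
\]
Continuity of the action on $\RR^m \times Z$ is then inherited from the continuity of $\mu$ already proved. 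The only subtle point, and the one I expect to require the most care, is the formal bookkeeping of weight decompositions: one has to be careful to apply each identity above simultaneously for \emph{every} $f \in K[X]$ and to use that taking a weight component of a weight component is either trivial or zero. Once this is in place, all four assertions of the lemma fall out in one stroke.
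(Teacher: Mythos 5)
Your proposal is correct and takes essentially the same route as the paper: the key additivity computation $\mu(\tau_1,\mu(\tau_2,w))=\mu(\tau_1+\tau_2,w)$ (using that a weight vector has trivial weight decomposition), the identification of $Z$ with the fixed-point set of $w\mapsto\mu(0,w)$, and closedness via the preimage of the diagonal in the Hausdorff space $\Xan\times\Xan$. Your write-up merely spells out the idempotence and fixed-point bookkeeping a bit more explicitly than the paper does.
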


\begin{proof}
First, for $\tau_1,\tau_2 \in \RR^m$ and $f \in K[X]$ with $\Gm^m$-weight
decomposition $f=\sum_{\beta \in \ZZ^m} f_\beta \in K[X]$ and $w \in \Xan$
we compute
\begin{align*} 
&\mu(\tau_1,\mu(\tau_2,w))(f)=
\min_{\beta \in \ZZ^m} (\beta \cdot \tau_1 + \mu(\tau_2,w)(f_\beta))
\\&= 
\min_{\beta \in \ZZ^m} (\beta \cdot \tau_1 + \beta \cdot \tau_2 + w (f_\beta)) 
= \mu(\tau_1+\tau_2,w)(f).
\end{align*}
This implies that $\mu(0,\mu(\tau,w))=\mu(\tau,w)$, so that $0$ acts
as the identity on $Z$. Hence $\mu$ is an action on $Z$.  Furthermore,
$Z$ can be characterised as the pre-image of the diagonal in $\Xan
\times \Xan$ under the continuous map $\Xan \to \Xan \times \Xan, w \mapsto
(w,\mu(0,w))$. Since $\Xan$ is Hausdorff, the diagonal is closed, hence
so is $Z$.  The last statement is immediate.
\end{proof}

The following refinement of the statement that $Z$ is a
retract of $\Xan$ was pointed out to us by Joe Rabinoff.

\begin{prop}
In the setting above, $Z$ is a strong deformation retract of
$\Xan$.
\end{prop}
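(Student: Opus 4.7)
The plan is to exhibit a continuous map $H : [0, 1] \times \Xan \to \Xan$ with $H(0, \cdot) = \id$, $H(1, \cdot) = r$ (where $r(w) := \mu(0, w)$), and $H(s, z) = z$ for $s \in [0, 1]$ and $z \in Z$. The idea is to lift, via the comorphism $\lambda^* : K[X] \to K[X][t_1^{\pm}, \ldots, t_m^{\pm}]$, a canonical strong deformation retract of $(\Gm^m)^{\an}$ onto its Gauss point at $\tau = 0$.

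First I would anchor the endpoints. The valuation $w$ factors as $w = \mathrm{ev}_{\one} \circ \lambda^*$, where $\mathrm{ev}_{\one}(\sum_\beta g_\beta t^\beta) := w(\sum_\beta g_\beta)$ is ``evaluate $t$ at $\one$'', and the retraction factors as $\mu(0, w) = v_{\mathrm{G}} \circ \lambda^*$, where $v_{\mathrm{G}}(\sum_\beta g_\beta t^\beta) := \min_\beta w(g_\beta)$ is the Gauss-type valuation. Both extend $w$ to $K[X][t^{\pm}]$ and are to be connected inside $(X \times \Gm^m)^{\an}$ by the classical family of Gauss-type valuations centred at $\one$ with radii running through $[0, 1]$. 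Concretely, expand $g = \sum_\beta g_\beta t^\beta$ in the coordinates $u_i := t_i - 1$ as a formal series $g = \sum_{\alpha \in \NN^m} h_\alpha u^\alpha$ and set
\[
v_{s, w}(g) := \min_{\alpha \in \NN^m} \bigl( w(h_\alpha) - \log(s) \cdot |\alpha| \bigr), \quad s \in [0, 1],
\]
with $|\alpha| := \sum_i \alpha_i$ and conventions $-\log 0 = \infty$, $\infty \cdot 0 = 0$; then define $H(s, w) := v_{s, w} \circ \lambda^*$. Endpoint identification at $s = 0$ is immediate: only $\alpha = 0$ contributes a finite value and $h_0 = \sum_\beta g_\beta$, so $v_{0, w} = \mathrm{ev}_{\one}$. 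At $s = 1$ the formula becomes $\min_\alpha w(h_\alpha)$, which equals $\min_\beta w(g_\beta)$ by non-Archimedean binomial inversion, as the transitions between $(g_\beta)_\beta$ and $(h_\alpha)_\alpha$ are $\ZZ$-linear with integer coefficients. Continuity in $(s, w)$ follows from the continuity of each $w \mapsto w(h_\alpha)$ and the fact that a pointwise minimum of continuous functions is continuous.

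The main conceptual step is verifying the strong-retract property $H(s, z) = z$ for $z \in Z$. Writing $f \in K[X]$ in its weight decomposition $f = \sum_\beta f_\beta$, the coefficient of $u^\alpha$ in $\lambda^*(f) = \sum_\beta f_\beta t^\beta$ is the integer combination $h_\alpha = \sum_\beta \binom{\beta}{\alpha} f_\beta$ (with $\binom{\beta}{\alpha} := \prod_i \binom{\beta_i}{\alpha_i}$), whose weight-$\beta$ component is $\binom{\beta}{\alpha} f_\beta$. Since $z = \mu(0, z)$, one has
\[
z(h_\alpha) = \min_\beta \bigl( v(\tbinom{\beta}{\alpha}) + z(f_\beta) \bigr) \geq z(f),
\]
with equality at $\alpha = 0$ because $\binom{\beta}{0} = 1$. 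Combined with $-\log(s) \cdot |\alpha| \geq 0$, this shows the minimum defining $v_{s, z}(\lambda^* f)$ is attained at $\alpha = 0$ for every $s$, giving $H(s, z)(f) = z(f)$.

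The main technical obstacle I anticipate is that for $\beta \in \ZZ^m$ with negative entries the expansion of $t^\beta$ in $u$ is an infinite power series, so convergence of $v_{s, w}$ needs care. For $s < 1$ the series converges because we remain strictly inside the polydisc $\{|u_i| < 1\}$, and the boundary case $s = 1$ is handled by direct identification with $v_{\mathrm{G}}$ together with continuity on $[0, 1)$.
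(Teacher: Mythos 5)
Your homotopy is essentially the one in the paper's own proof, up to the reparametrisation $r=-\log s$: expand $\lambda^*f$ around $t=\one$ and apply the family of Gauss-type valuations centred at $\one$ with varying radius. The endpoint identifications and the verification that points of $Z$ stay fixed (the $\alpha=0$ term attains the minimum because the generalised binomial coefficients are integers, hence have nonnegative valuation) match the paper's argument. The one step that does not hold as written is the continuity claim. Your minimum runs over the infinite set $\NN^m$, and an infimum of infinitely many continuous functions is in general only upper semicontinuous, so ``a pointwise minimum of continuous functions is continuous'' is not a valid justification here; likewise, identifying $H(1,\cdot)$ with $\mu(0,\cdot)$ and having continuity on $[0,1)$ does not yield continuity at $s=1$, which is exactly the delicate point, since there the penalty $-\log(s)\,\lvert\alpha\rvert$ degenerates to $0$ and infinitely many terms can compete; joint continuity in $(s,w)$ at $s=0$ (where you need $w(h_\alpha)$ uniformly bounded below) is also not addressed.

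The paper closes this gap with an observation you are already halfway to: each $h_\alpha$ is a $\ZZ$-linear combination of the finitely many $f_\beta$, and each $f_\beta$ is in turn a $\ZZ$-linear combination of finitely many $h_\alpha$ (your ``binomial inversion'', proved by multiplying with $t^{N}=((t-\one)+\one)^{N}$ to reduce to the polynomial case). Combining the two transitions, all $h_\alpha$ lie in the $\ZZ$-span of finitely many among them, so there is a bound $d$, depending only on $f$, such that for every $(s,w)$ the minimum may be taken over $\lvert\alpha\rvert\le d$. This at once gives attainment, the identity $\min_\alpha w(h_\alpha)=\min_\beta w(f_\beta)$ at $s=1$, and joint continuity of $H$ in $(s,w)$ as a finite minimum of continuous functions, including at the endpoints. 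With this finite reduction inserted (and a one-line remark that the Gauss-type extension of a multiplicative seminorm is multiplicative, so that $H(s,w)$ is indeed a point of $\Xan$ --- a fact the paper also takes for granted), your proof coincides with the paper's.
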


\begin{proof}
This can be derived using the general techniques of \cite[Chapter
6]{Berkovich90}; here is a shortcut in our language. For $r \in
[0,\infty]$ and $w \in \Xan$ let $w_r$ be the function $K[X] \to \Rb$
defined as follows. Take $f \in K[X]$, expand $f(\phi(t)x):=\sum_{\beta
\in \ZZ^m} f_\beta t^\beta$, and rewrite this Laurent series with
$K[X]$-coefficients as a formal power series
\[ \sum_{\beta \in \ZZ^m} f_\beta t^\beta =
        \sum_{\gamma \in (\ZZ_{\geq 0})^m} g_\gamma
        (t-1)^\gamma \]
around the identity element $1=(1,\ldots,1)$ of $\Gm^m$. Set
\[ w_r(f):=\min_\gamma (w(g_\gamma)+|\gamma|r), \text{
where } |\gamma|:=\gamma_1+\ldots+\gamma_m. \]
We argue that this minimum is attained, and that it can be replaced
by a minimum over a finite set of $\gamma$s that does not depend on $w$
or $r$.  In the rewriting process, we replace each Laurent monomial $t^\beta$
by the formal power series of $((t-1)+1)^\beta$ around $1$. This shows
that each $g_\gamma$ is a $\ZZ$-linear combination of the $f_\beta$. In
particular, for all $\gamma$ we have $w(g_\gamma) \geq \min_\beta
w(f_\beta)$, and for $r>0$ this suffices to conclude that the minimum
is attained.

Conversely, we claim that each $f_\beta$ is a $\ZZ$-linear combination
of the $g_\gamma$. This is immediate if all $\beta$ with $f_\beta \neq
0$ are already in $(\ZZ_{\geq 0})^m$ (since then we are just rewriting
polynomials, and the rewriting can be reversed). The general case can be
reduced to this, since multiplication of power series with a fixed power
series of the form $((t-1)+1)^\beta$ is a $\ZZ$-linear isomorphism
with inverse equal to multiplication with $((t-1)+1)^{-\beta}$.
Consequently, we find that the minimum is attained for $r=0$, as well,
and that $\min_\gamma w(g_\gamma)=\min_\beta w(f_\beta)=\mu(0,w)(f)$.

Combining the two $\ZZ$-linear transitions, all countably many $g_\gamma$
are $\ZZ$-linear combinations of finitely many among them. If $d$ is the maximum
value of $|\gamma|$ among these finitely many, then we can replace the
minimum defining $w_r(f)$ by the minimum over all $\gamma$ with $|\gamma|
\leq d$. Then it is evident that $w_r(f)$ depends continuously on the
pair $(w,r) \in \Xan \times [0,\infty]$.

Now $w_r$ is a point in $\Xan$ that 
depends continuously on $(w,r)$. For $r=\infty$ we have
\[ w_\infty(f)=w(g_0)=w(\sum_\beta f_\beta)=w(f), \] 
so $w_\infty=w$. As mentioned above, we have $w_0=\mu(0,w)$. Finally, we must argue that if $w$ already
lies in $Z$, that is, if $w=\mu(0,w)$, then $w_r=w$ for all $r \in
(0,\infty]$. But in this case the $\gamma=0$ term in the definition of
$w_r$ equals $\min_\beta w(f_\beta)$ and all other terms are (strictly)
larger than this, so that $w_r=w$ as desired.
\end{proof}

We conclude this section with two remarks on quotients. The first concerns
the categorical quotient $X//\Gm^m$ of $X$ by the action of $\Gm^m$, i.e.,
the affine variety with coordinate ring equal to the ring of $\Gm^m$-invariants
in $K[X]$. The morphism $X \to X//\Gm^m$ gives rise to a morphism of
analytic spaces, which sends a valuation $w \in \Xan$ to its restriction
to the $\Gm^m$-invariants.

\begin{lm} \label{lm:quotient}
The map $\Xan \to (X//\Gm^m)^\an$ factorises as 
\[ 
\Xan \to Z \to Z/\RR^m \to (X//\Gm^m)^\an. 
\]
\end{lm}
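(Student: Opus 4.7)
The plan is to produce the factorisation by exhibiting the three maps explicitly and checking they compose to the restriction map $\Xan \to (X//\Gm^m)^\an$, $w \mapsto w|_{K[X]^{\Gm^m}}$. The first two arrows are already at hand from the preceding lemma: the retraction $r : \Xan \to Z$, $w \mapsto \mu(0,w)$, and the topological quotient $q : Z \to Z/\RR^m$. What remains is to construct a continuous map $\bar\rho : Z/\RR^m \to (X//\Gm^m)^\an$ and to verify that $\bar\rho \circ q \circ r$ is the restriction map.

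Everything hinges on a single observation: if $f \in K[X]^{\Gm^m}$, then in its $\Gm^m$-weight decomposition $f = \sum_\beta f_\beta$ the only non-zero summand is $f_0$, so $f = f_0$. Plugging this into the formula $\mu(\tau,w)(f) = \min_\beta(w(f_\beta) + \beta \cdot \tau)$ yields $\mu(\tau,w)(f) = w(f)$ for every $\tau \in \RR^m$, every $w \in \Xan$ and every invariant $f$. This gives two consequences simultaneously. First, taking $\tau = 0$ shows that the retraction preserves valuations of invariants, i.e.\ $\mu(0,w)|_{K[X]^{\Gm^m}} = w|_{K[X]^{\Gm^m}}$, so the composition of $r$ with the analytification $\rho : Z \hookrightarrow \Xan \to (X//\Gm^m)^\an$ of the quotient morphism $X \to X//\Gm^m$ equals $w \mapsto w|_{K[X]^{\Gm^m}}$. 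Second, letting $\tau$ vary shows that $\rho$ is constant on each $\RR^m$-orbit, so it descends to a well-defined set-theoretic map $\bar\rho : Z/\RR^m \to (X//\Gm^m)^\an$ with $\bar\rho \circ q = \rho$. Chaining these gives $\bar\rho \circ q \circ r = \rho \circ r$, which is the restriction map, as required.

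The only remaining point is continuity of $\bar\rho$. Since $Z$ carries the subspace topology from $\Xan$, the map $\rho$ is continuous as the restriction of the continuous analytification of $X \to X//\Gm^m$. By the universal property of the quotient topology on $Z/\RR^m$, continuity of $\rho = \bar\rho \circ q$ implies continuity of $\bar\rho$. No genuine obstacle arises; the whole argument is driven by the fact that $\Gm^m$-invariants sit in weight zero, so both the retraction $w \mapsto \mu(0,w)$ and the $\RR^m$-action $\tau \cdot w = \mu(\tau,w)$ are trivial on invariants, and the verification reduces to a short computation plus an application of the quotient-topology universal property.
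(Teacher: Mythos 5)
Your proof is correct and follows essentially the same route as the paper: the entire argument rests on the observation that a $\Gm^m$-invariant $f$ has weight decomposition concentrated in weight zero, so $\mu(\tau,w)(f)=w(f)$ for all $\tau$, which is exactly the paper's one-line computation. The only addition is your explicit continuity check for the induced map on $Z/\RR^m$ via the quotient topology, which the paper leaves implicit.
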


\begin{proof}
We need to show that, for $\tau \in \RR^m$ and $w \in \Xan$, the
restriction of $w':=\mu(\tau,w)$ to the $\Gm^m$-invariants $f \in
K[X]$ does not depend on $\tau$ and equals the restriction of $w$ to
$\Gm^m$-invariants. But this is immediate: $f$ has weight zero, and hence
\[ w'(f)=\mu(\tau,w)(f)=w(f)+0\cdot \tau=w(f), \]
as desired. 
\end{proof}

The second remark concerns the passage from affine cones to projective
varieties. Suppose that $X \subseteq \AA^n$ is an affine cone, and denote
by $\PP X \subseteq \PP^{n-1}$ the corresponding projective variety.
The points of $(\PP X)^\an$ are equivalence classes of points of $\Xan
\setminus\{\infty\}$.

\begin{lm} \label{lm:projective}
The map $Z \to (\PP X)^\an$ factorises as
\[ 
Z \to Z/U \to (\PP X)^\an,
\]
where $U:=A^{-1}\RR(1,\ldots,1)$. 
\end{lm}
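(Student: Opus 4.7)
The plan is to show that the continuous map $Z \to (\PP X)^\an$ (induced from the restriction of the map $\Xan \setminus \{\infty\} \to (\PP X)^\an$) is constant on $U$-orbits, where the $U$-action is the restriction of the $\RR^m$-action $\mu$. Since the quotient $Z/U$ carries the quotient topology, the universal property then yields the desired factorisation $Z \to Z/U \to (\PP X)^\an$.

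To that end, fix $\tau \in U$ and $w \in Z$, and choose $C \in \RR$ with $A\tau = C(1,\ldots,1)$. Given a homogeneous element $f \in K[\hat{X}]$ of degree $d$, decompose $f=\sum_\beta f_\beta$ into $\Gm^m$-weight vectors. Since the $\Gm^m$-action rescales each coordinate $x_i$ without changing its total degree, each component $f_\beta$ is again homogeneous of degree $d$, and every $\beta$ occurring in the decomposition has the form $\beta = A^\T \alpha$ for some monomial exponent $\alpha \in \NN^n$ with $|\alpha|=d$.

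The key computation is that for any such $\beta$,
\[ \beta \cdot \tau = (A^\T \alpha) \cdot \tau = \alpha \cdot (A\tau) = C\,(\alpha \cdot (1,\ldots,1)) = dC. \]
Combining this with the definition of $\mu$ and the fact that $w = \mu(0,w)$ because $w \in Z$, we obtain
\[ \mu(\tau,w)(f) = \min_\beta\bigl(w(f_\beta) + \beta \cdot \tau\bigr) = dC + \min_\beta w(f_\beta) = dC + w(f). \]
Since this holds for every homogeneous $f$ with the same constant $C$ (independent of $f$ and its degree), the valuations $w$ and $\mu(\tau,w)$ are equivalent under the relation defining $(\PP X)^\an$, so they map to the same point there.

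There is no real obstacle in this argument; it is a direct unwinding of the definitions. The only point requiring a little care is the compatibility of the $\Gm^m$-weight decomposition with the grading by total degree, but this is automatic because the torus acts diagonally via characters, so each monomial lies in a single weight space and a single degree component simultaneously.
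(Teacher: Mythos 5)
Your proof is correct and follows essentially the same route as the paper: decompose a homogeneous degree-$d$ element into $\Gm^m$-weight components, observe that each occurring weight $\beta$ satisfies $\beta\cdot\tau=dC$ when $A\tau=C(1,\ldots,1)$, and conclude $\mu(\tau,w)(f)=dC+w(f)$ using $w=\mu(0,w)$ for $w\in Z$. Your explicit justification that $\beta=A^{\T}\alpha$ with $|\alpha|=d$ is a welcome detail that the paper leaves implicit.
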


\begin{proof}
We need to show that if $A\tau=(C,\ldots,C)$ for some $C \in \RR$ and
if $w \in Z$, then $w':=\mu(\tau,w)$ is equivalent to $w$. Thus let $f$
be a homogeneous polynomial of degree $d$ in the graded ring $K[X]$, and
decompose $f=\sum_{\beta \in \ZZ^m} f_\beta$. Then $\beta \cdot \tau=dC$
for all $\beta$ with $f_\beta$ non-zero, and hence
\[ w'(f)=\min_\beta (w(f_\beta)+\beta \cdot \tau) 
=dC+\min_\beta w(f_\beta)=\mu(0,w)=w. \]
\end{proof}

\section{Smearing a subspace around by a torus} \label{sec:Smearing}

Let $Y \subseteq \AA^n$ be a linear subspace not contained in any
coordinate hyperplane and let $\phi:\Gm^m \to \Gm^n$ be a torus
homomorphism given by an $n \times m$ integer matrix $A$. Define
\[ X:=\overline{\{\phi(t)y \mid y \in Y, t \in \Gm^m\}}, \]
so that $X$ is stable under the action of $\Gm^m$.  Let $X^0,Y^0$
be the open subsets of $X,Y$, respectively, where none of the
coordinates vanish. Then we have $\Trop(X^0)=\Trop(X) \cap \RR^n$ and
$\Trop(Y^0)=\Trop(Y) \cap \RR^n$ and 
\[ \Trop(X^0)=A\RR^m + \Trop(Y^0); \]
this follows, for instance, from \cite[Proposition 2.5]{Payne07}.
Let $\mu:\RR^m \times \Xan \to Z$ be the map constructed in
Section~\ref{sec:Torus}. We then obtain a continuous map
\[ \RR^m \times \Trop(Y) \to Z,\ (\tau,\eta) \mapsto
\mu(\tau,\sigma_Y(\eta)), \]
where $\sigma_Y$ is the section of $\Yan \to \Trop(Y)$ constructed in
Section~\ref{sec:Linear}. We would like to use this map to construct a
section $\Trop(X) \to Z$ of the surjection $Z \to \Trop(X)$, or at least
a section $\Trop(X^0) \to Z^0$, where $Z^0$ is the preimage of $X^0$ in
$Z^0$. There are two basic strategies for doing so. The first strategy
is given in the following proposition.

\begin{prop} \label{prop:SmearingA}
If the map $\RR^m \times \Trop(Y^0) \to \Trop(X^0),\ (\tau,\eta)\mapsto
A\tau + \eta$ has a continuous section, then so does the map $\trop:
Z^0 \to \Trop(X^0)$. Moreover, if the former section can be chosen
$\RR^m$-equivariant, then so can the latter.
\end{prop}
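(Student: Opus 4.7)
The plan is to assemble the desired section as a composition of three continuous maps already at hand. Let $s:\Trop(X^0) \to \RR^m \times \Trop(Y^0)$ denote the hypothesised continuous section, and decompose $s(\xi)=(s_1(\xi),s_2(\xi))$. Let $\sigma_Y:\Trop(Y) \to \Yan$ be the section constructed in Theorem~\ref{thm:Linear}. Since $Y \subseteq X$ (take $t=1$ in the definition of $X$), there is a natural closed inclusion $\Yan \hookrightarrow \Xan$, and I can then set
\[ \sigma_X := \mu \circ (\id_{\RR^m} \times \sigma_Y) \circ s, \]
i.e.\ $\sigma_X(\xi) = \mu(s_1(\xi),\sigma_Y(s_2(\xi))) \in Z$. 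Continuity of $\sigma_X$ will be immediate from continuity of $s$, $\sigma_Y$, and $\mu$.

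To verify that $\sigma_X$ is a section of $\trop:Z^0 \to \Trop(X^0)$, I would invoke the commutative diagram of Section~\ref{sec:Torus}, which gives $\trop(\mu(\tau,w)) = A\tau + \trop(w)$ for every $(\tau,w) \in \RR^m \times \Xan$. Specialising to $\tau = s_1(\xi)$ and $w = \sigma_Y(s_2(\xi))$, and using that $\sigma_Y$ and $s$ are themselves sections, yields
\[ \trop(\sigma_X(\xi)) = A s_1(\xi) + \trop(\sigma_Y(s_2(\xi))) = A s_1(\xi) + s_2(\xi) = \xi. \]
In particular $\trop(\sigma_X(\xi)) = \xi \in \RR^n$, so every coordinate of $\sigma_X(\xi)$ has finite valuation, which is precisely what is needed to conclude that $\sigma_X(\xi) \in Z^0$.

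For the equivariance refinement, the natural $\RR^m$-action on the source is $\tau' \cdot (\tau,\eta) = (\tau + \tau',\eta)$, under which equivariance of $s$ reads $s(\xi + A\tau') = (s_1(\xi)+\tau',s_2(\xi))$. Combining this with the additive identity $\mu(\tau_1+\tau_2,w) = \mu(\tau_1,\mu(\tau_2,w))$ that was established when $\mu$ was shown to be an action on $Z$, I compute
\[ \sigma_X(\xi + A\tau') = \mu(s_1(\xi)+\tau', \sigma_Y(s_2(\xi))) = \mu(\tau', \sigma_X(\xi)), \]
which is exactly $\RR^m$-equivariance for $\sigma_X$. I do not foresee a substantive obstacle here: the genuine content lies in Sections~\ref{sec:Linear} and~\ref{sec:Torus}, and the proposition reduces to a short diagram chase assembling those ingredients together with the hypothesised section $s$.
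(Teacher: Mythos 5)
Your proof is correct and is essentially the paper's own argument: the paper likewise defines the section as the composition of the hypothesised section $s$ with $(\tau,\eta)\mapsto\mu(\tau,\sigma_Y(\eta))$, and treats the verification (which you spell out via the commutative diagram and the identity $\mu(\tau_1+\tau_2,w)=\mu(\tau_1,\mu(\tau_2,w))$) as immediate.
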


Here the action of $\RR^m$ on $\RR^m \times \Trop(Y^0)$ is given by
addition in the first coordinate and the trivial action on $\Trop(Y^0)$.

\begin{proof}
The composition 
\[ \sigma:(\Trop(X^0) \to \RR^m \times \Trop(Y^0) \to Z^0) \]
of a continous section $\Trop(X^0) \to \RR^m \times \Trop(Y^0)$ and
the map $(\tau,\eta) \mapsto \mu(\tau,\sigma_Y(\eta))$ is a section
$\Trop(X^0) \to Z^0$. The second statement is immediate.
\end{proof}

Here is an application of this construction. Recall from \cite{Develin05}
that the {\em tropical rank} of a real matrix is the largest size of
a square submatrix whose {\em tropical determinant} is attained by a
single term.

\begin{prop}
Let $m \leq p$ natural numbers. Let $X \subseteq \AA^{m \times p}$ be the
matrix variety defined by the vanishing of all $m \times m$-minors. On
$X$ acts $\Gm^m$ by scaling rows. Then the map $\trop:(X^0)^\an
\to \Trop(X^0)$ has a continuous section on the open subset $U$ of
$\Trop(X^0)$ consisting of matrices whose first $m-1$ columns form a
tropically non-singular matrix.
\end{prop}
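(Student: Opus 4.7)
The plan is to apply the smearing construction of Proposition~\ref{prop:SmearingA}. Take $Y\subseteq\AA^{m\times p}$ to be the linear subspace defined by the $p$ equations $\sum_{i=1}^m x_{ij}=0$, $j=1,\ldots,p$. Matrices in $Y$ have linearly dependent rows, and rank is preserved by the row-scaling torus action, so $\phi(\Gm^m)\cdot Y\subseteq X$. Conversely, any rank-$(m-1)$ matrix $N\in X^0$ admitting a linear row-relation with all coefficients non-zero lies in $\phi(\Gm^m)\cdot Y$ (pull back along $\phi$ applied to the reciprocals of those coefficients), and such matrices are dense in $X$, so $X=\overline{\phi(\Gm^m)\cdot Y}$. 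The cocircuits of the matroid of $Y$ are the $p$ columns of indeterminates, so $\Trop(Y^0)$ consists of all real $m\times p$ matrices $\xi$ for which $\min_i\xi_{ij}$ is attained at least twice for every $j\in[p]$, and the matrix $A$ of the row-scaling action sends $\tau\in\RR^m$ to the matrix whose $i$-th row is constantly $\tau_i$.

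By Proposition~\ref{prop:SmearingA}, it suffices to construct a continuous section of $\RR^m\times\Trop(Y^0)\to\Trop(X^0)$, $(\tau,\xi)\mapsto A\tau+\xi$, over the open subset $U$. Given $\eta\in U$, I would read the hypothesis of tropical non-singularity of the $m\times(m-1)$ block of the first $m-1$ columns of $\eta$ as the statement that there is a unique $\tau\in\RR^m/\RR(1,\ldots,1)$ such that for every $j\in[m-1]$ the minimum $\min_i(\eta_{ij}-\tau_i)$ is attained at least twice. Each such column imposes one tropical-hyperplane condition on $\tau\bmod\RR(1,\ldots,1)$, and tropical non-singularity is precisely the transversality of these $m-1$ conditions in the $(m-1)$-dimensional quotient. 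Lift $\tau$ to $\RR^m$ by any continuous normalisation (for instance $\tau_m:=\eta_{m1}$), and set $\xi:=\eta-A\tau$. Since the direction $\RR(1,\ldots,1)$ along which different normalisations differ lies inside the lineality of $\Trop(Y^0)$, changing the normalisation merely shifts $(\tau,\xi)$ along a direction descending to the same element of $Z$.

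The essential remaining task is to verify that $\xi\in\Trop(Y^0)$, i.e.\ that $\min_i\xi_{ij}$ is attained at least twice also for $j\geq m$. For each such $j_0$, I would use the hypothesis $\eta\in\Trop(X^0)$ through the $m\times m$ minor of $\eta$ on columns $1,\ldots,m-1,j_0$: the tropical singularity of this minor, combined with the double attainment already ensured on the first $m-1$ columns of $\xi$ and the rigidity of the optimal permutation that tropical non-singularity of the initial block provides, forces $\min_i(\eta_{ij_0}-\tau_i)$ to be attained by at least two indices. Continuity of $\eta\mapsto\tau(\eta)$ is then checked by stratifying $U$ according to which pairs of row-indices realise the minima in each of the first $m-1$ columns of $\xi$: on each closed stratum the map is piecewise linear, and compatibility across strata follows from uniqueness modulo $\RR(1,\ldots,1)$. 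Composing with $\sigma_Y$ from Theorem~\ref{thm:Linear} and the map $\mu$ from Section~\ref{sec:Torus} yields the desired continuous section.

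The main obstacle will be the propagation step: showing that tropical singularity of the $m\times m$ minors of $\eta$ together with double attainment in the first $m-1$ columns of $\xi$ forces double attainment in every further column. The combinatorial rigidity supplied by tropical non-singularity of the initial $m\times(m-1)$ block---a unique optimal assignment, up to the diagonal freedom---is what makes this argument work, and handling ties and boundary behaviour carefully in the propagation is where the technical effort will lie.
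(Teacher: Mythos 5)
Your construction is essentially the paper's: the same auxiliary linear space $Y=\{x : \one^T x=0\}$, the same row-scaling torus, the same description of $\Trop(Y^0)$ as the matrices whose column minima are attained at least twice, and the same key observation that tropical non-singularity of the first $m-1$ columns makes the intersection of the corresponding $m-1$ tropical hyperplanes in $\RR^m$ modulo $\RR(1,\ldots,1)$ a single point $-\tau$, which you then normalise and feed into Proposition~\ref{prop:SmearingA}. For continuity of $\xi\mapsto\tau$ the paper simply notes that this intersection coincides with the stable intersection, which depends continuously on the hyperplanes; your stratification/piecewise-linear argument is an acceptable substitute (the intersection point is given by tropical Cramer minors, hence piecewise linear).

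The one place where your write-up falls short is exactly the step you flag as the ``main obstacle'': showing that after subtracting $A\tau$ the column minima are attained twice also in the columns $j\geq m$. You leave this as a sketch (``rigidity of the optimal permutation \dots forces \dots''), but no minor-by-minor combinatorics is needed, and the paper closes it in one line: since $\xi\in\Trop(X^0)=A\RR^m+\Trop(Y^0)$ (an equality you yourself state), there exists some $\tau'$ with $\xi-A\tau'\in\Trop(Y^0)$, i.e.\ $-\tau'$ lies on all $p$ tropical hyperplanes, in particular on the first $m-1$; uniqueness of their intersection forces $\tau'\equiv\tau$ modulo $\RR(1,\ldots,1)$, and since the all-one matrix lies in the lineality space of $\Trop(Y^0)$, it follows that $\xi-A\tau\in\Trop(Y^0)$. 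If you prefer your local route, it can be made rigorous via the tropical Cramer/duality theorem of Richter-Gebert, Sturmfels and Theobald: an $m\times m$ matrix is tropically singular if and only if the $m$ hyperplanes given by its columns share a common point; combined with the uniqueness of the intersection of the first $m-1$ hyperplanes this gives exactly the propagation you want. As written, however, that step is an intention rather than a proof, and it is the only substantive point at which your argument is incomplete.
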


\begin{proof}
Let $Y$ be the linear subspace contained in $X$ consisting of all matrices
$y$ such that $\one^T y = 0$, and let $\Gm^m$ act on matrices by scaling
rows. Let $A$ be the corresponding $(mp) \times m$-matrix of integers.
Then $X=\overline{\Gm^m Y}$ and hence $\Trop(X^0)=A \RR^{m} + \Trop(Y^0)$.
Now $\Trop(Y^0)$ is the set of matrices $\eta$ whose columns all
lie in the tropical hyperplane where the minimum of the coordinates is
attained at least twice. We will now argue that the map  
\[ \RR^m \times \Trop(Y^0 ) \to \Trop(X^0),\ (\tau,\eta) \to A\tau 
+ \eta \]
has an $\RR^m$-equivariant section over the open set $U$, defined as
follows. Let $\xi \in U$. Then for $\tau \in \RR^m$ the condition that
$\xi-A\tau$ lies in $\Trop(Y^0)$ is equivalent to the condition that for
each $j=1,\ldots,p$ the minimum $\min_i(\xi_{ij}-\tau_i)$ is attained
at least twice. This means that $-\tau$ lies on the intersection of
the $p$ tropical hyperplanes in $\RR^m$ with coefficient vectors given
by the columns of $\xi$. By the tropical non-singularity of the first
$m-1$ columns of $\xi$, the intersection of the corresponding $m-1$
hyperplanes is already spanned by a single vector, $-\tau$, unique
up to tropical scaling. 
For definiteness, choose $\tau_1$ equal to
$\xi_{11}$. We have that  $\tau$ depends continuously on $\xi$. 
Indeed the 
 {\em stable} intersection of $m-1$ hyperplanes, that in this case coincides 
with the intersection,
depends continuously on the hyperplanes (see
\cite[Section 5]{RichterGebert05} and \cite[Section 4]{Mikhalkin06a}).
Now $\xi-A \tau$ lies in
$\Trop(Y^0)$ and we can apply Proposition \ref{prop:SmearingA} to obtain a
$\RR^m$-equivariant section $U \to (X^0)^\an$. 
\end{proof}

\begin{re}
The map $U \to \RR^m,\ \xi \mapsto \tau$ constructed in the latter
proof can in general {\em not}
be extended to a continuous map $\Trop(X^0) \to \RR^m$ with the property
that $\xi-A\tau \in \Trop(Y^0)$ for all $\xi$. 
Indeed, consider the case where $m=p=4$, so that $X$ is the hypersurface
defined by a single determinant. Take two column vectors $a,b \in \RR^4$
in general position, so that the corresponding planes $H_a,H_b$ in tropical
projective $3$-space intersect in a tropical projective line of the form:
\begin{center}
\includegraphics[scale=.7]{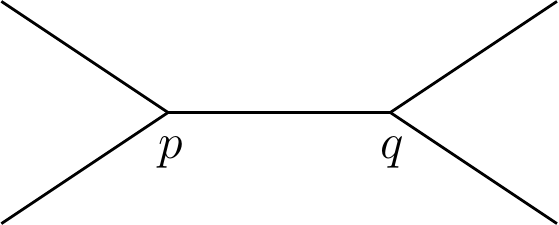}
\end{center}
Then the stable intersection of $H_a,H_a,H_b$ is one of the two trivalent
(projective) points, say $p$, and the stable intersection of $H_a,H_b,H_b$
is the other point, $q$. Now consider the matrix $\xi=(a|a|b|b)$.
Wiggling the first column slightly while keeping the remaining columns
fixed, the matrix stays within $\Trop(X^0)$ but now with the first three
columns defining hyperplanes that intersect in a single projective point
near $p$. Hence we see that for $\xi$ we need to take  $-\tau$ in
the stable intersection of $H_a,H_a,H_b$, i.e., in $p$, if we want it
to depend continuously on $\xi$. But wiggling the last column instead,
we find that we need to take $-\tau$ in $q$. Thus $-\tau$ cannot depend
continuously on $\xi$. 

We remark that the tropical multiplicity of such $\xi$ is typically
equal to two. After tropical scaling of rows and columns of $\xi$,
and after permuting rows if necessary, we have
\[ \xi=\begin{bmatrix}
0 & 0 & 0 & 0\\
0 & 0 & b_2 & b_2\\
0 & 0 & b_3 & b_3\\
0 & 0 & b_4 & b_4
\end{bmatrix} \]
where $0 \leq b_2 \leq b_3 \leq b_4$. The tropical determinant equals
$b_2$. Moreover, if $b_2 < b_3$,
then  $\ini_\xi 
\det(x)=(x_{13}x_{24}-x_{14}x_{23})(x_{31}x_{42}-x_{41}x_{32})$, which
defines a scheme with two irreducible components. In view
of \cite[Theorem 10.6]{Gubler14} it is conceivable that no continuous
section of $\trop$ near $\xi$ exists.
\end{re}

The second strategy for constructing a section $\Trop(X^0) \to (X^0)^\an$
is to show that the map $\RR^m \times \Trop(Y^0) \to (X^0)^\an$ factors
through the map $\RR^m \times \Trop(Y^0) \to \Trop(X^0)$. We will now
formulate sufficient conditions for this to happen.

The first of these conditions is purely polyhedral, namely, we require
that for each $\eta \in \Trop(Y^0)$ the set
\[ T_\eta:=\{\tau \in \RR^m \mid A\tau + \eta \in \Trop(Y^0)\}, \]
which is the support of a polyhedral complex, is connected. Observe that
these sets encode the ambiguity in the decomposition of $\xi$: if $\xi$
equals both $\eta_1+A\tau_1$ and $\eta_2+A\tau_2$, then $\tau_1-\tau_2
\in T_{\eta_1}$. Connectedness of $T_\eta$ means that there exists a
polyhedral path of decompositions of $\xi$ from the first
decomposition to the second. The second condition is more algebraic. Let $\eta \in
\Trop(Y^0)$. Extend the valuation $w:=\sigma_{Y}(\eta)$ from $K[Y]$
to the field $K(Y)$; this can be done since it sends no non-zero
polynomials to infinity. Let $y \in Y(K(Y))$ be the generic point of
$Y$. The coordinates of $y$ are thus $(x_1|_Y,\ldots,x_n|_Y)$, and the
vector of $w$-valuations of these coordinates is $\eta$.  By slight
abuse of notation, we write $w(y)=\eta$. Let $\tau \in \RR^m$ be such
that the line segment $[0,\tau]$ is contained in $T_\eta$. Then we
require that for all sufficiently small $\epsilon > 0$ there exists a
valued extension $(L,w_L)$ of $K(Y)$ and a $t \in \Gm^m(L)$ such that
$w_L(t)=\epsilon \tau$ (with the same abuse of notation: the vector of
valuations of the coordinates of $t$ equals $\epsilon \tau$) and $\phi(t)y\in Y^0(L)$.

\begin{prop} \label{prop:Smearing}
Suppose that the torus homomorphism $\phi:\Gm^m \to \Gm^n$ and the
linear space $Y$ satisfy the two aforementioned requirements. Then,
for $\xi=A\tau_1 + \eta_1 \in \Trop(X^0)$ with $\tau_1 \in \RR^m$ and
$\eta_1 \in \Trop(Y^0)$ the expression
\[ \sigma(\xi):=\mu(\tau_1,\sigma_Y(\eta_1)) \in Z \subseteq \Xan \]
does not depend on the chosen decomposition of $\xi \in \Trop(X^0)$.
The map $\sigma:\Trop(X^0) \to Z^0$ thus defined is a continuous,
$\RR^m$-equivariant section of the surjection $Z^0 \to \Trop(X^0)$.
\end{prop}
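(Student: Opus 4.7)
I verify the four properties of $\sigma$ in order: the section property, $\RR^m$-equivariance, continuity, and well-definedness; the last is the heart of the matter. The section property is immediate from the diagram in Section~\ref{sec:Torus}: $\trop(\sigma(\xi))=A\tau_1+\eta_1=\xi$. Equivariance is a direct consequence of the action axiom $\mu(\tau',\mu(\tau_1,w))=\mu(\tau'+\tau_1,w)$, which the computation in Section~\ref{sec:Torus} establishes on all of $\Xan$, so $\mu(\tau',\sigma(\xi))=\sigma(\xi+A\tau')$. Continuity, once well-definedness is in hand, follows from the continuity of $\mu$ and of $\sigma_Y$ after stratifying $\Trop(X^0)$ into finitely many closed pieces on each of which a single basis $J$ of the matroid of $Y$ is compatible with all relevant $\eta$ and a continuous local section $\xi\mapsto(\tau,\eta)$ exists.

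For well-definedness, fix two decompositions $\xi=A\tau_1+\eta_1=A\tau_2+\eta_2$ and set $\delta:=\tau_2-\tau_1\in T_{\eta_2}$, so that $\eta_1=\eta_2+A\delta$. The action axiom rewrites $\mu(\tau_2,\sigma_Y(\eta_2))=\mu(\tau_1,\mu(\delta,\sigma_Y(\eta_2)))$, and since a valuation of the form $\mu(\tau_1,w)$ is determined by the restriction of $w$ to $\Gm^m$-weight-homogeneous elements, the required equality $\mu(\tau_1,\sigma_Y(\eta_1))=\mu(\tau_2,\sigma_Y(\eta_2))$ reduces to the identity
\[
 \sigma_Y(\eta_2+A\delta)(f|_Y) \;=\; \sigma_Y(\eta_2)(f|_Y)+\beta\cdot\delta
\]
for every $\Gm^m$-weight-$\beta$-homogeneous $f\in K[X]$. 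By the polyhedral connectedness hypothesis, $T_{\eta_2}$ is connected inside a finite polyhedral complex, so iterating reduces verification to the case of $\delta$ ranging over a short line segment $[0,\epsilon\tau]$ contained in a single cell on which a fixed basis $J$ remains compatible with all $\eta_2+A\delta$ in that segment.

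On such a segment the algebraic hypothesis supplies, for $\epsilon>0$ small, a valued extension $(L,w_L)$ of $K(Y)$ and an element $t\in\Gm^m(L)$ with $w_L(t)=\epsilon\tau$ and $y':=\phi(t)y\in Y^0(L)$, where $y$ is the generic point of $Y$ equipped with the extension of $\sigma_Y(\eta_2)$ to $K(Y)$. For a weight-$\beta$-homogeneous $f\in K[X]$, the $\Gm^m$-equivariance of $f$ yields the exact identity $f(y')=t^\beta(f|_Y)(y)$ in $L$, hence
\[
 w_L(f(y'))=\epsilon\beta\cdot\tau+\sigma_Y(\eta_2)(f|_Y).
\]
At the same time, since $y'\in Y^0(L)$, one may write $f(y')=(f|_Y)(y')$, expand $(f|_Y)(y')=\sum_\alpha c_\alpha(y')^\alpha$ in the basis $J$, and apply the ultrametric inequality together with $w_L((y')^\alpha)=\alpha\cdot(\eta_2+\epsilon A\tau)|_J$ to deduce $w_L(f(y'))\geq\sigma_Y(\eta_2+\epsilon A\tau)(f|_Y)$. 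Combining these two computations gives one half of the desired identity; the reverse inequality is read off from the explicit basis-$J$ formula for $\sigma_Y$ after showing that every $\alpha$ attaining the minimum in $\sigma_Y(\eta_2)(f|_Y)$ satisfies $\alpha\cdot A\tau|_J=\beta\cdot\tau$.

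The main obstacle I foresee is precisely this last check. Expanding each minimum-attaining $\alpha$ as $\alpha_j=\gamma_j+\sum_{i\notin J}m_{i,j}$, with $m_{i,j}$ supported on the matroid circuit of $i$ and $\sum_j m_{i,j}=\gamma_i$, one computes
\[
 \alpha\cdot A\tau|_J-\beta\cdot\tau = \sum_{i\notin J}\sum_{j\in J} m_{i,j}(a_j-a_i)\cdot\tau.
\]
Vanishing of this expression must be extracted from the tropical circuit equalities encoded by $\eta_2+A\tau\in\Trop(Y^0)$, combined with careful tracking of which multi-indices $\alpha$ actually attain the minimum in the basis-$J$ expansion of $f|_Y$. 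This interlocking of matroid relations for $Y$ with the torus weights $a_i$ is the delicate technical step on which the entire proof rests. Once it is in place, the identity holds on each short segment, the connectedness of $T_{\eta_2}$ globalises it, and all four properties of $\sigma$ follow.
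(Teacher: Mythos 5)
Your reduction of the problem is sound and matches the paper's: it suffices to check the identity on $\Gm^m$-homogeneous $f$, connectedness of $T_{\eta_2}$ reduces everything to short segments $[0,\epsilon\tau]\subseteq T_\eta$ on which one basis $J$ stays compatible, and the inequality you extract from the algebraic hypothesis --- evaluating the basis-$J$ expansion of $f|_Y$ at $\phi(t)y\in Y^0(L)$ and comparing with $f(\phi(t)y)=t^\beta f(y)$ --- is exactly the paper's computation. Your treatment of the section property, $\RR^m$-equivariance, and continuity (finite closed polyhedral cover with affine-linear local sections) is also essentially the paper's.

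The gap is the step you yourself flag: the reverse inequality on a short segment. You propose to obtain it from the claim that every exponent $\alpha$ attaining the minimum in $\sigma_Y(\eta_2)(f|_Y)$ satisfies $\alpha\cdot (A\tau)|_J=\beta\cdot\tau$, to be ``extracted from the tropical circuit equalities''; no argument is given, and the claim is both stronger than needed (for the inequality one only needs \emph{some} minimising $\alpha$ with $\alpha\cdot (A\tau)|_J\leq\beta\cdot\tau$, and what is really equivalent to the desired identity is $\min_{\alpha\ \mathrm{minimising}}\alpha\cdot (A\tau)|_J=\beta\cdot\tau$) and not obviously true, since the monomials $x^\alpha$, $\alpha\in\NN^J$, occurring in $f|_Y$ are not individually of weight $\beta$ ($Y$ is not torus-stable), so there is no a priori link between $\alpha A$ and $\beta$. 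As it stands the proof is therefore incomplete precisely at the point the proposition is about. The paper sidesteps the reverse inequality entirely: it sets $\ell(\epsilon):=\mu(-\epsilon\tau,\sigma_Y(A\epsilon\tau+\eta))(f)$ for $\epsilon\in[0,1]$, observes that $\ell$ is continuous, and uses the single inequality you already have (applied at every point of the segment, and in both directions $\pm\tau$ at interior points, which is legitimate because $T_{A\epsilon_0\tau+\eta}=T_\eta-\epsilon_0\tau$) to conclude that every point of $[0,1]$ is a local extremum of $\ell$ of the same type; a continuous function on an interval with that property is constant, which yields the exact identity with no analysis of minimising exponents. If you want to complete your route instead, you would have to prove the statement about minimising $\alpha$'s, and it is hard to see how to do that without effectively reproving the proposition; the paper's constancy trick is the missing idea.
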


Before we give the proof, we discuss a simple example in the plane.

\begin{ex}
Let $Y \subseteq \AA^2$ be given by the linear equation $x_1-x_2=0$,
and let $\phi:\Gm \to \Gm^2$ be given by $\phi(t)=(t,t^{-1})$, so that
$A=(1,-1)^T$. Then we have
\[ \Trop(Y)=\{(\eta_1,\eta_2) \in \Rb^2 \mid \eta_1=\eta_2 \}
\text{ and } A\RR^1 = \{(\tau,-\tau) \mid \tau \in \RR\}. \]
We have $X=\overline{\phi(\Gm) Y}=\AA^2$ 
and $\Trop(X^0)=A\RR^1 + \Trop(Y^0)$ and
$T_\eta=\{0\}$ for all $\eta \in \Trop(Y^0)$. In the second requirement
we can just take $t=1$ for all $\eta$.  Thus both requirements are
met. For $\xi=(\xi_1,\xi_2)=A\tau + \eta=(\tau+\eta_1,-\tau+\eta_1)$
and $f=\sum_{i,j} c_{ij} x_1^i x_2^j$ we find
\[ \sigma(\xi)(f)=\min_{k \in \ZZ} 
\min_{i-j=k}(k\tau + v(c_{ij})+(i+j)\eta_1) 
=
\min_{i,j}(v(c_{ij})+i\xi_1+j\xi_2), 
\]
which extends to all of $\Trop(X)$, and in fact equals the section
obtained in Section~\ref{sec:Linear} when regarding $X$ as a linear space.
\hfill $\diamondsuit$
\end{ex}

\begin{proof}[Proof of Proposition~\ref{prop:Smearing}.]
For the first statement we need to prove that if $\xi$ can also be
decomposed as $A\tau_2 + \eta_2$ then
\[ \mu(\tau_2,\sigma_Y(\eta_2)) = 
\mu(\tau_1,\sigma_Y(\eta_1)). \]
This is equivalent to 
\[ \mu(\tau_2-\tau_1,\sigma_Y(\eta_2))=\mu(0,\sigma_Y(\eta_1)). \]
Now $\tau_1-\tau_2 \in T_{\eta_1}$, and since $T_{\eta_1}$ is connected,
by walking from $0$ to $\tau_1-\tau_2$ through $T_{\eta_1}$ along a
polyhedral path, it suffices to prove the following local version of this
equality. Let $\eta \in \Trop(Y^0)$ and $\tau \in \RR^m$ be such that the
segment $[0,\tau]$ lies entirely in $T_{\eta}$. Then we want to show that
\[ 
\mu(-\tau,\sigma_Y(A\tau+\eta))=\mu(0,\sigma_Y(\eta)).
\]
By definition of $\mu$, it suffices to prove this when applied to
a non-zero $f \in K[X]$ that is homogeneous with respect to the
$\Gm^m$-action, say of weight $\beta$. We will prove, in fact, that
the function
\[ \ell:[0,1] \to \RR,\ 
\epsilon \mapsto \mu(-\epsilon\tau,\sigma_Y(A\epsilon\tau+\eta))(f) \]
is constant on the interval $[0,1]$. Since $\ell$ is a continuous function
and $[0,1]$ is connected, it suffices to prove that $\ell$ has a local
minimum at every point in $[0,1]$. We give the argument at the point $0$;
it follows at other points in a similar manner.

Set $w:=\sigma_{Y}(\eta)$, and let $y \in Y(K(Y))$ be the generic
point. Then for $\epsilon>0$ sufficiently small a valued field extension
$(L,w_L) \supseteq (K(Y),w)$ and $t \in \Gm^m(L)$ exist as in the second
requirement, that is, with $w_L(t)= \epsilon\tau$ and $\phi(t)y \in
Y^0(L)$. After shrinking $\epsilon$ if necessary we may
assume that $\eta$ and $\epsilon A\tau + \eta$ are both compatible with
the same basis $J \subseteq [n]$ of the matroid defined by $Y$. Expand
the restriction $f|_{Y}$ as $\sum_{\alpha \in \NN^J} c_\alpha x^\alpha$.
Then on the one hand we have
\[ w_L(f|_{Y}(\phi(t)y))=w_L(t^\beta f|_{Y}(y))= 
\beta \cdot \epsilon \tau + \sigma_Y(\eta)(f), \]
where we have used that $\phi(t)y \in Y(L)$ and that $f$ is homogeneous of
$\Gm^m$-weight $\beta$. On the other hand, we have 
\[ w_L(f|_{Y}(ty))=w_L(\sum_{\alpha \in \NN^J} c_\alpha t^{\alpha A} y^\alpha)
\leq \min_\alpha (v(c_\alpha)+\alpha \cdot A \epsilon \tau+\alpha \cdot \eta) 
=\sigma_Y(A\epsilon \tau+\eta)(f). \]
Thus we find that
\[ \ell(\epsilon)=\sigma_Y(A\epsilon\tau+\eta)(f) - \beta\cdot \epsilon \tau 
\geq \sigma_Y(\eta)(f)=\ell(0), \]
as desired. This shows that the section $\sigma:\Trop(X^0) \to
Z$ is well-defined. To see that $\sigma$ is continuous, decompose
$\Trop(Y^0)$ into finitely many closed polyhedra $P_i$ and let $P_i'$
denote the image of 
\[ \RR^m \times P_i \to \Trop(X^0),\ (\tau,\eta)
\mapsto A\tau+\eta. \] 
By basic linear algebra over $\RR$, on each $P_i'$ this map has a
continuous (in fact, affine-linear) section $P_i' \to \RR^m \times
P_i$. This shows that the restriction of $\sigma$ to each $P_i'$
is continuous. Since the $P_i'$ form a finite closed cover of
$\Trop(X^0)$, the map $\sigma$ is continuous on $\Trop(X^0)$. 

Finally, we need to verify that $\sigma$ is $\RR^m$-equivariant.
Let $\xi=A\tau+\eta \in \Trop(X^0)$ with $\tau \in \RR^m$ and $\eta \in
\Trop(Y^0)$. Let $\tau' \in \RR^m$. Then we have
\[ 
\sigma(A\tau'+\xi)=
\sigma(A(\tau'+\tau)+\eta)=
\mu(\tau'+\tau,\sigma_Y(\eta))=
\mu(\tau',\mu(\tau,\sigma_Y(\eta)))=
\mu(\tau',\sigma(\xi)),
\]
as desired.
\end{proof}

\begin{re}
While Propositions~\ref{prop:SmearingA} and~\ref{prop:Smearing} give
sections only over $\Trop(X^0)$, we will see that, at least in the
cases of Grassmannians of planes and of the variety of rank-two matrices,
sections exists over all of $\Trop(X)$.
\end{re}

\section{Grassmannians of planes} \label{sec:Grassmannian}

In this section we set $n:=\binom{m}{2}$ and consider $\AA^n$
with coordinates $x_{ij}$ for $1 \leq i < j \leq n$. We also write
$x_{ji}=-x_{ij}$ for $i>j$, and $\xi_{ji}=\xi_{ij}$ for tropical
coordinates. Let $X:=\cGr(2,m) \subseteq \AA^n$ denote the affine cone
over the Grassmannian of planes, given as the image of the polynomial map
\[ \psi:(\AA^{m})^2 \to \AA^n, 
(y,z) \mapsto (y_i z_j - y_j z_i)_{i<j}. \]
This map is $\Gm^m$-equivariant with respect to the standard (diagonal)
action of $\Gm^m$ on $(\AA^m)^2$ and the action of $\Gm^m$ on $\AA^n$ via
$\phi:\Gm^m \to \Gm^n$ given by $\phi(t):=(t_i t_j)_{i<j}$.  The dense
subset of $(\AA^m)^2$ where $z$ has no non-zero entries equals $\Gm^m
\cdot (\AA^m \times \{\one\})$. Consequently, if we
set 
\[ Y:=\psi(\AA^m \times \{\one\}) \subseteq X, \]
where $\one$ is the all-one vector, then we have $X=\overline{\phi(\Gm^m)
\cdot Y}$. Note that $Y$ is a linear space, with generic
point $(y_i-y_j)_{i<j}$; hence we are in the setting of
Section~\ref{sec:Smearing}. Let $\mu:\RR^m \times \Xan \to Z \subseteq
\Xan$ be the map from Section~\ref{sec:Torus}, which restricts to an
action of $\RR^m$ on $Z$, and let $A \in \ZZ^{n \times m}$ be the matrix
corresponding to $\phi$. We will prove the following theorem.

\begin{thm} \label{thm:Grassmannian}
The surjective projection from $Z \subseteq \cGr(2,m)^\an$ to
$\Trop(\cGr(2,m))$ has a continuous, $\RR^m$-equivariant section.
\end{thm}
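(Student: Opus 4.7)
The plan is to apply Proposition~\ref{prop:SmearingA} to produce a continuous, $\RR^m$-equivariant section over $\Trop(X^0) = \Trop(X) \cap \RR^n$, and then to extend it by hand across the boundary $\Trop(X) \setminus \RR^n$. Proposition~\ref{prop:SmearingA} requires a continuous, $\RR^m$-equivariant section of
\[
\RR^m \times \Trop(Y^0) \to \Trop(X^0), \qquad (\tau, \eta) \mapsto A\tau + \eta,
\]
where $\Trop(X^0)$ is the space of phylogenetic tree metrics on $[m]$, $\Trop(Y^0)$ is the Bergman fan of the graphic matroid of $K_m$---the locus in $\RR^n$ where $\min\{\eta_{ij},\eta_{jk},\eta_{ik}\}$ is attained at least twice on every triple---and $A\tau = (\tau_i + \tau_j)_{i<j}$. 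I would take the strategy of the first proposition rather than the second because, as a direct computation shows, the generic point $y$ of $Y$ satisfies $\phi(t)y\in Y$ only when $t_i = 1/(\alpha+\beta y_i)$ for some $\alpha,\beta$ in an extension, and the resulting two-parameter family of valuation vectors $w_L(t)$ already fails to cover the polyhedron $T_\eta$ even for $\eta=0$, so the algebraic hypothesis of Proposition~\ref{prop:Smearing} does not hold in this setting.

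To construct the tropical section, for each $\xi \in \Trop(X^0)$ I would invoke the greedy algorithm of Remark~\ref{re:constant}, applied to the graphic matroid of $K_m$ with weight vector $\xi$. This outputs a canonical spanning tree $J \subseteq \binom{[m]}{2}$ of $K_m$; the values $(\eta_{ij})_{(i,j)\in J}$ are determined by a cell-wise affine formula, the remaining entries of $\eta$ are fixed by the three-term circuit relations on triples, and $\tau$ is pinned down (up to the diagonal $\RR\cdot\one$, fixed by an $\RR^m$-equivariance-compatible normalization) by the $m-1$ identities $\xi_{ij} = \tau_i + \tau_j + \eta_{ij}$ for $(i,j)\in J$. $\RR^m$-equivariance of this decomposition follows because replacing $\xi$ by $\xi + A\tau'$ shifts each $\xi_{ij}$ by $\tau'_i + \tau'_j$ without altering either the greedy choice or the circuit structure; continuity across adjacent maximal cells of $\Trop(X^0)$ holds because on their shared codimension-one wall the two competing greedy outputs give the same decomposition.

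To extend $\sigma$ across $\Trop(X)\setminus\RR^n$, I analyze a boundary point $\xi$ with $S := \{(i,j) : \xi_{ij} = \infty\}$ nonempty. The Pl\"ucker relations constrain $S$ to correspond combinatorially to subsets of leaves collapsing in the tropical tree, so the corresponding stratum of $\Trop(X)$ is naturally identified with $\Trop(\cGr(2,m'))$ for some smaller $m'<m$, augmented by a toric factor. This permits an inductive definition of $\sigma$ on each stratum, and continuity across strata is verified by the limit argument at the end of the proof of Theorem~\ref{thm:Linear}: as $\eta^{(l)}\to\xi$, the quantity $\sigma(\eta^{(l)})(f)$ is a minimum of finitely many continuous functions of $\eta^{(l)}$ that passes to the correct boundary limit.

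The main obstacle will be the uniform continuous and $\RR^m$-equivariant patching of the tropical section across the finitely many maximal cells of $\Trop(X^0)$, where the greedy spanning tree and the corresponding affine decomposition formulas can both change; verifying that these pieces glue together, together with checking the $\RR^m$-equivariant compatibility of the inductive extension at each boundary stratum, requires a careful combinatorial analysis of the Bergman fan of the graphic matroid of $K_m$ and of the tropical Pl\"ucker relations.
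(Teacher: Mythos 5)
Your stated reason for discarding Proposition~\ref{prop:Smearing} is a miscalculation, and with it you lose the only visible mechanism for proving $\RR^m$-equivariance. The hypothesis of Proposition~\ref{prop:Smearing} does not ask that a single family of points $t$ cover $T_\eta$; it asks, for each fixed $\tau$ with $[0,\tau]\subseteq T_\eta$ and each sufficiently small $\epsilon>0$, for \emph{some} valued extension and \emph{some} $t$ with $w_L(t)=\epsilon\tau$ and $\phi(t)y\in Y^0(L)$. For the Grassmannian, $T_\eta$ is the negative of the tropical line determined by $\eta$, and the relevant $\tau$ are, after adding a multiple of $\one$ and passing to a tree $\Gamma$ compatible with both $\eta$ and $A\epsilon\tau+\eta$, constant on the two components $I,J$ of $\Gamma$ minus its minimal edge $i_0j_0$, so that $\epsilon\tau=a\one_I$ up to $\RR\one$. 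The Ansatz $t_i=1/(c(y_i-y_{j_0})+d)$ with $v(c)=-a-\eta_{i_0j_0}$ and $v(d)=0$ then gives exactly $w(t_i)=a$ for $i\in I$ and $w(t_i)=0$ for $i\in J$, because $w(y_i-y_{j_0})$ equals $\eta_{i_0j_0}$ for $i\in I$ and is at least $\eta_{i_0j_0}+a$ for $i\in J$. Even in your test case $\eta=0$ the set $T_0$ is the negated star line, and the achievable valuation vectors $c\one-a e_j$ ($a\geq 0$, $j\in[m]$) cover it, so there is no dimension mismatch. Thus the algebraic hypothesis \emph{does} hold, and verifying it is precisely how one shows that $\mu(\tau,\sigma_Y(\eta))$ is independent of the chosen decomposition $\xi=A\tau+\eta$ --- which is what equivariance requires.

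With that route rejected, your own construction is missing its real content. The greedy decomposition is underspecified exactly where it matters: you never say how $(\eta_{ij})_{ij\in J}$ is chosen on the tree edges; the relations $\xi_{ij}=\tau_i+\tau_j+\eta_{ij}$, $ij\in J$, determine $\tau$ only up to the one-dimensional kernel of $\tau\mapsto(\tau_i+\tau_j)_{ij\in J}$, and your normalization is left open; and the claim that replacing $\xi$ by $\xi+A\tau'$ does not alter the greedy choice is false, since adding $\tau'_i+\tau'_j$ to the edge weights changes their ordering and hence, in general, the maximal-weight spanning tree. What is needed is a continuous rule selecting a point $-\tau$ on the tropical line represented by $\xi$ (for instance the stable intersection with a fixed tropical hyperplane), and any such rule based on an auxiliary choice is not equivariant on the nose, so equivariance must again be deduced from independence of the choice --- i.e.\ from the Proposition~\ref{prop:Smearing}-type argument you set aside. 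You defer both the cell-by-cell gluing and the boundary compatibility as ``the main obstacle,'' but that obstacle is the theorem: in particular the boundary extension needs more than the limit argument of Theorem~\ref{thm:Linear}, because along a degenerating sequence the stratum (the support $J$), the compatible trees, and the $\tau$-part of the decomposition all change, and one must control the latter and perform a basis exchange to compare the sections attached to $Y_{J^{(p)}}$ with the one attached to $Y_J$.
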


A version of this theorem first appeared in \cite{Cueto14}; see
Remark~\ref{re:relationtoCueto} below. Lifts from $\Trop(\cGr(2,m))$
into tropicalisations of other flag varieties were constructed in
\cite{Iriarte10,Manon12}.

Our proof consists of two parts. We first construct a continuous section
in the spirit of Proposition~\ref{prop:SmearingA}, which relies on the
choice of a hyperplane in $\Trop(\PP^{m-1})$. Then we use the technique
of Proposition~\ref{prop:Smearing} to verify that the constructed section
is, in fact, natural and independent of the choice of hyperplane. This
then also implies $\RR^m$-equivariance.

We will use that the matroid on the variables $x_{ij}$ defined by $Y$
is the graphical matroid of the complete graph $K_m$. This is immediate
from the definition of $Y$, and was also exploited in \cite[Section
4]{Ardila06}. Thus a basis $J$ as in Section~\ref{sec:Linear} is a tree
with vertex set $[m]$. We will write $\Gamma$ instead of $J$. Given
such a tree $\Gamma$, one finds all $\eta \in \Trop(Y)$ compatible
with $\Gamma$ as follows (see also Remark~\ref{re:constant}). First,
give arbitrary values in $\Rb$ to all $\eta_{ij}$ with $ij$ an edge in
the tree $\Gamma$. Then, for each ege $ij$ in $K_m \setminus \Gamma$ set
$\eta_{ij}$ equal to the minimum of the $\eta_{kl}$ over all edges $kl$
in the simple path from $i$ to $j$ in $\Gamma$. See Figure \ref{tree}.

\begin{figure}
\begin{center}
\includegraphics[width=.7\textwidth]{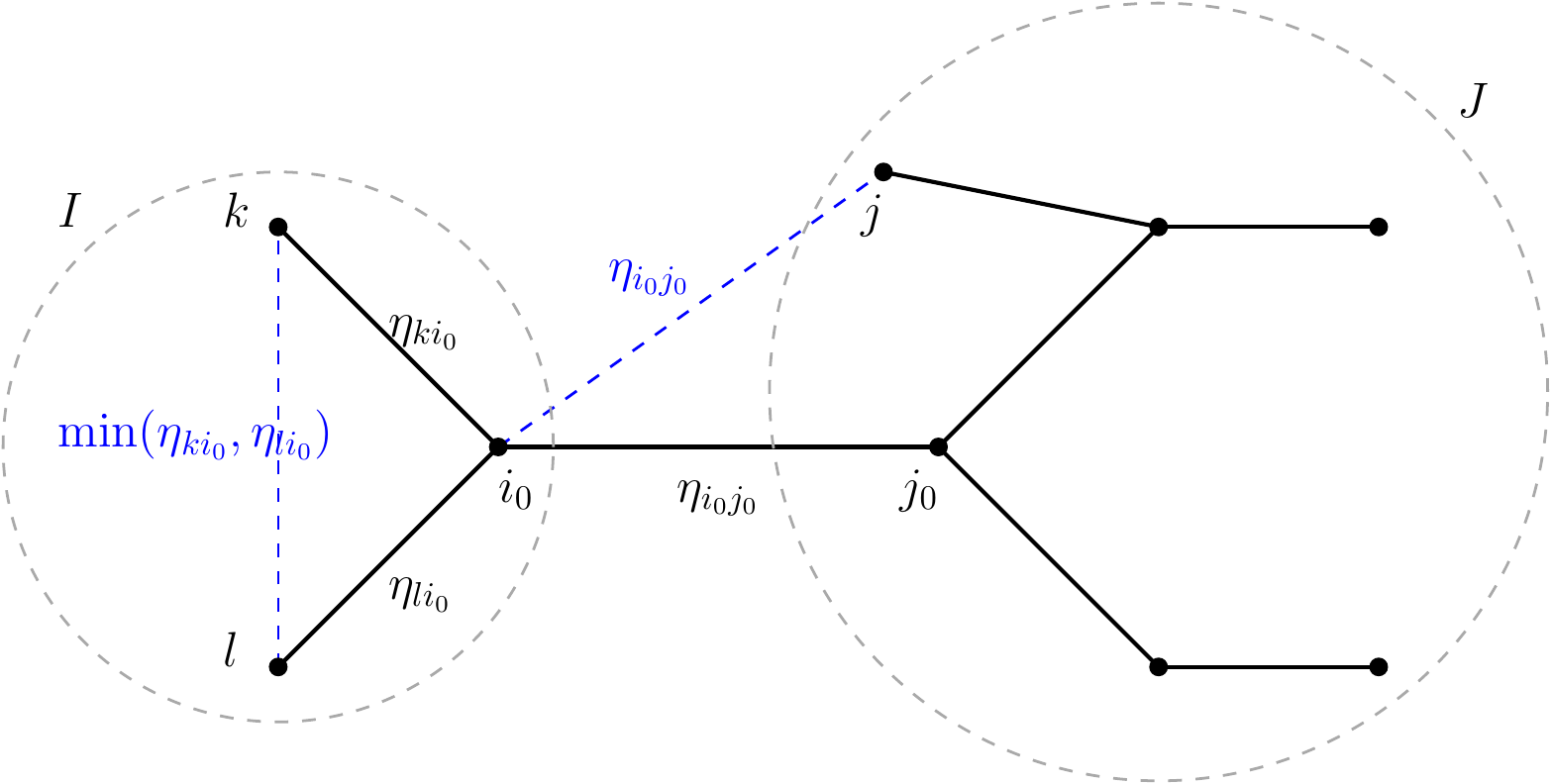}
\end{center}
\caption{A spanning tree in $K_{9}$ with minimal-weight edge $i_0j_0$}\label{tree}
\end{figure}

Up to tropical scaling, the points of $\Trop(X)\setminus\{\infty\}$
are in one-to-one correspondence with tropical projective lines in the
simplex $\Delta:=\Trop(\PP^{m-1})$ (see \cite[Theorem 3.8]{Speyer04}
for $\Trop(X^0)$). Under this correspondence the point $(\xi_{ij})_{i<j}$
gives rise to the tropical projective line consisting of points $\zeta$
for which $\min\{\xi_{ij}+\zeta_k,\xi_{ik}+\zeta_j,\xi_{jk}+\zeta_i\}$
is attained at least twice for each $1 \leq i<j<k \leq m$. We will use
the following characterisation of $\Trop(Y) \subseteq \Trop(X)$.

\begin{lm}
Under the correspondence above, the points of $\Trop(Y)$ correspond
bijectively to the tropical lines that pass through the all-zero
point $0$.
\end{lm}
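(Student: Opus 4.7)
The plan is to translate both sides of the claimed bijection into the same combinatorial condition on the triples $1\le i<j<k\le m$, and then prove their equivalence.

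Unwinding the description of $L(\xi)$ at $\zeta=0$ given just before the lemma, the point $0$ lies in $L(\xi)$ if and only if, for every triple $1\le i<j<k\le m$, the minimum of $\xi_{ij}, \xi_{ik}, \xi_{jk}$ is attained at least twice. I will refer to this as the \emph{triangle condition} on $\xi$. The task thus reduces to showing that the triangle condition is equivalent to $\xi\in\Trop(Y)$, and, since both sides are closed under tropical scaling, the induced bijection with tropical lines through $0$ then follows from the bijection between $\Trop(X)\setminus\{\infty\}$ and tropical lines recalled above.

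For the easy direction, note that $Y$ has generic point $(y_i-y_j)_{i<j}$, so the linear forms $x_{ij}-x_{ik}+x_{jk}$ lie in $I(Y)$. Being constant-coefficient with support the triangle $\{ij,ik,jk\}$, they force any $\xi\in\Trop(Y)$ to satisfy the triangle condition, by the standard tropical characterisation in terms of initial ideals recalled in Section~\ref{sec:Intro}.

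For the converse, I would use that, as already noted in the paper, the matroid of $Y$ is the graphical matroid $M(K_m)$, so that $\Trop(Y)$ is the Bergman fan of $M(K_m)$, characterised by the min-attained-twice condition over every circuit --- i.e.\ over every simple cycle of $K_m$. Hence it suffices to show that the triangle condition propagates from cycles of length $3$ to arbitrary simple cycles. I would do this by induction on the cycle length $k \ge 4$: given a simple cycle $C$ of length $k$ in $K_m$, pick any chord of $C$, splitting $C$ into a triangle $T$ and a $(k-1)$-cycle $C'$, then combine the triangle condition on $T$ with the inductive hypothesis on $C'$ via a short case analysis depending on whether the global minimum of $\xi$ along the edges of $C$ strictly exceeds, equals, or is strictly less than the value of $\xi$ on the chord.

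The main obstacle is precisely this case analysis in the inductive step; it is routine but is the one place where something genuinely has to be checked. A cleaner alternative, if one prefers to avoid the induction, is to apply the greedy algorithm from Remark~\ref{re:constant} to $\xi$ to produce a spanning tree $\Gamma$ of $K_m$ compatible with $\xi$, and then verify that the triangle conditions force $\xi_{ij}$ (for a non-tree edge $ij$) to equal the minimum of $\xi$ along the $\Gamma$-path from $i$ to $j$; this places $\xi$ directly into the explicit tree description of $\Trop(Y)$ recalled in the paragraph immediately preceding the lemma.
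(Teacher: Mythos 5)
Your proposal is correct and follows essentially the same route as the paper: the forward direction via the triangle relations $x_{ij}-x_{ik}+x_{jk}\in I(Y)$ (the paper phrases this with a lift of $\eta$ to a point of $Y$, you with the initial-ideal criterion, which is equivalent), and the converse by propagating the min-attained-twice condition from triangles to arbitrary cycles by induction on cycle length, then invoking that the circuits of the graphical matroid of $K_m$ cut out $\Trop(Y)$. The chord-induction you sketch is exactly the ``easy induction'' the paper alludes to, so no genuine gap remains.
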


\begin{proof}
First, for $\eta \in \Trop(Y)$, choose a lift $(x_{ij}:=y_i-y_j)_{i<j}$
in $Y$ with $v(x_{ij})=\eta_{ij}$. Then the $3 \times 3$-subdeterminant
of the matrix $(y|\one|\one)^T$ in columns $i<j<k$ equals
$0=x_{ij}-x_{ik}+x_{jk}$. Hence $\min\{\xi_{ij},\xi_{ik},\xi_{jk}\}$
is attained at least twice, i.e., $0$ lies on the tropical line
corresponding to $\eta$.

Conversely, suppose that $0$ lies on the tropical line
corresponding to $\eta$, i.e., that for all $i<j<k$ the minimum
$\min\{\eta_{ij},\eta_{ik},\eta_{jk}\}$ is attained at least twice. Equip
$K_m$ with edge weights given by $\eta$. Then in each triangle $\{i,j,k\}$
the minimum edge weight is attained at least twice. An easy induction
then shows that in each {\em cycle} the minimum edge weight is also
attained at least twice (see also Figure~\ref{fig:cycle} for a similar
argument for the graphical matroid of the complete bipartite graph,
where triangles are replace by four-cycles).  Since these cycles are
precisely the circuits of the matroid of $Y$, which form a tropical
basis by \cite{Ardila06,Bogart07}, $\eta$ lies in $\Trop(Y)$.
\end{proof}

Note that the action of $\RR^m$ on $\Trop(X)$ is by translation of the
tropical lines.

\begin{proof}[Proof of Theorem~\ref{thm:Grassmannian}, construction of a continuous section.]

To go from $\xi \in \Trop(X)$ to a pair $(\tau,\eta) \in \RR^m
\times \Trop(Y)$ one is tempted to proceed as follows. Let
$\ell$ be the line represented by $\xi$, let $\tau \in \RR^m$ be
such that $-\tau+\RR(1,\ldots,1)$ is a point on $\ell$, and set
$\eta:=-A\tau+\xi$. Then $\eta$ represents the translate of $\ell$
by $-\tau$, which therefore passes through $0$.  By construction, the
pair $(\tau,\eta)$ satisfies $A\tau + \eta=\xi$, so that the valuation
$\sigma(\xi):=\mu(\tau,\sigma_Y(\eta))$ maps to $\xi$.

There are various problems with this definition of $\sigma$, but
we can sharpen it as follows. A first, minor problem is that if
$\xi=\infty(=(\infty,\ldots,\infty))$, then $\xi$ does not represent
a line.  In that case, we just set $\sigma(\xi)$ equal to $\infty \in
\Xan$. The second, and more serious, problem is that $\ell$ may not
contain points $\tau \in \RR^m/\RR(1,\ldots,1)$. To remedy this, we will
use a stratification of $X=\cGr(2,m)$ and $\Trop(X)$ defined as follows
(and also used, in slightly different terminology, in \cite{Cueto14}).
 For $x \in X$ let $J_x \subseteq [m]$ be the set of $i$ for which there
exists a $j \neq i$ with $x_{ij}\neq 0$.    
Note that $J_x$ is either empty,
or else has cardinality at least two. For any subset $J \subseteq [m]$
of cardinality zero or at least two we define
\[ X_J:=\{x \in X \mid J_x=J\}. \] 
This stratum is a locally closed subset of $X$, and $X$ is the disjoint
union of these strata. The stratum $X_\emptyset$ consists of $0$ only,
while for $|J| \geq 2$ the stratum $X_J$ is the $\psi$-image of the subset
of $(\AA^m)^2$ where $y,z$ are linearly independent and $(y_i,z_i)=(0,0)$
if and only if $i \not \in J$. So $X_{[m]}$ is the largest one among
these strata, and it parameterises lines that intersect $\Gm^m$. Each
$X_J$ is the dense set in the (cone over the) smaller Grassmannian of
$2$-spaces contained in $\AA^J \times \{0\}^I$ consisting of all spaces
that intersect $\Gm^J \times \{0\}^I$. Similarly, points of $\Trop(X_J)$
parameterise tropical lines in the face $\Delta_J$ of $\Delta$ (where
all $I$-coordinates are $\infty$) that intersect the relative interior
of $\Delta_J$.  Let $Y_J$ denote the $J$-analogue of $Y$, that is, the
subspace of $K^{\binom{J}{2}}$ parameterised by $(y_j-y_{j'})_{j<j'}$,
identified with a subspace of $K^{\binom{m}{2}}$ by extending with
zero coordinates.  Then $Y_J\setminus \{0\}$ is a subset of $X_J$,
and in fact we have $\phi(\Gm^J) \cdot (Y_J \setminus \{0\})=X_J$. Note
also that $Y_J$ is not a {\em subspace} of $Y=Y_{[m]}$ but rather its
{\em image} under projecting some coordinates to $0$. Note that both
$Y_\emptyset=X_\emptyset=\{0\}$.

We choose $\tau \in \Rb^m$ as a function of $\xi$ as follows. If
$\xi=\infty$, then set $\tau:=\infty$.  Otherwise, let $H$ be the tropical
hyperplane in $\Delta$ with the tropical equation $\zeta_1 \oplus \cdots
\oplus \zeta_m$, and let $\tau$ represent the stable intersection of
$H$ and the tropical line $\ell$ represented by $\xi$. By continuity of stable
intersection, the projective point $\tau + \RR(1,\ldots,1)$ depends
continuously on non-infinite $\xi$.

Next, we choose $\eta$ as a function of $\xi$. If $\xi=\infty$, then set
$\eta:=\infty$. Otherwise, let $J$ of cardinality at least two be such
that $\xi \in \Trop(X_J)$. Then $\ell$ lies in $\Delta_J$ and intersects
the relative interior of $\Delta_J$. As a consequence, $\tau_j \neq
\infty$ if and only if $j \in J$. Set $\eta_{ij}:=\xi_{ij}-\tau_i-\tau_j$
for $i,j \in J$ and $\eta_{ij}:=\infty$ if one of $i,j$ lies in $I$. Then
$\eta$ lies in $\Trop(Y_J)$ (and this also holds for $\xi=\infty$,
in which case $J=\emptyset$).

The pair $(\tau,\eta)$ thus constructed does not depend continuously on
$\xi$, but we claim that the valuation
\[ \sigma(\xi):=\mu(\tau,\sigma_{Y_J}(\eta)) \in \Xan \]
does. Here we abuse notation slightly, since $\tau$ will in general have
some coordinates equal to $\infty$---but one readily verifies that,
since $A$ contains only non-negative entries, $\mu$ extends to $\Rb^m
\times \Xan$. By construction, we have $A\tau+\eta=\xi$, and this implies
that $\sigma(\xi) \in \Xan$ does indeed map to $\xi$.

First observe that tropically scaling all coordinates of
$\tau$ with $c \in \RR$ and all coordinates of $\eta$ with $-2c$ leads
to the same valuation. Now let $\xi^{(p)},p=1,2,3,\ldots$ be a sequence of
points in $\Trop(X)$ that converges to a non-infinity limit $\xi \in
\Trop(X_J)$ with $|J| \geq 2$. After deleting an
initial segment of the sequence, we may assume that each $\xi^{(p)}$
lies in some $\Trop(X_{J^{(p)}})$ with $J^{(p)} \supseteq J$. Let
$\eta^{(p)} \in \Trop(Y_{J^{(p)}})$ and $\tau^{(p)} \in \RR^m$ be
the corresponding points, so that $\xi^{(p)}= A\tau^{(p)}+\eta^{(p)}$
for all $p$. The projective points $\tau^{(p)}
+\RR(1,\ldots,1)$ converge to $\tau+\RR(1,\ldots,1)$
(by continuity of stable intersection). Hence, after suitable tropical scalings
of the $\tau^{(p)}$ and the $\eta^{(p)}$, we achieve that $\tau^{(p)}
\to \tau$ for $p \to \infty$. Then for $i,j \in J$ we find that
\[ \eta_{ij}^{(p)}=\xi^{(p)}_{ij}-\tau^{(p)}_i-\tau^{(p)}_j \to
\xi_{ij}-\tau_i-\tau_j = \eta_{ij} \text{ for } p \to
\infty. \]
We now argue that for each $\Gm^m$-homogeneous element $f \in K[X]$
the value $\sigma(\xi^{(p)})(f)$ converges to $\sigma(\xi)(f)$.
Let $\beta \in \NN^m$ be the weight of $f$. If $\beta_i>0$ for some $i
\not \in J$, then $f$ lies in the ideal generated by the coordinates
$x_{kj}$ for which one of $k,j$ does not lie in $J$. In this case,
$\sigma(\xi)(f)=\infty$. To see that $\sigma(\xi^{(p)})(f)$ tends to
infinity, expand $f=\sum_{ij} x_{ij} f_{ij}$ where the sum
is over pairs $(i,j)$ that are not both in $J$. Then we have
\[ \sigma(\xi^{(p)})(f) \geq \min_{ij}(\xi^{(p)}_{ij} +
\sigma(\xi^{(p)})(f_{ij})). \]
Since each $\xi^{(p)}_{ij}$ tends to infinity and each
$\sigma(\xi^{(p)})(f_{ij})$ is bounded from below, we find the desired
convergence (a similar convergence argument applies when the limit $\xi$
equals $\infty$). If $\beta_i=0$ for all $i \not \in J$, then $f$ depends
only on the coordinates $x_{ij}$ with $i,j \in J$, and it suffices to
show that
\[ \sigma_{Y_{J^{(p)}}}(\eta^{(p)})(f) \to \sigma_{Y_J}(\eta)(f),
\quad p \to \infty. \]
Using the definition of $\sigma$ and the fact that $\eta^{(p)}_{ij}
\to \eta_{ij}$ for $p \to \infty$ and $i,j \in J$, this convergence
follows if there exists a tree on $J^{(p)}$ compatible with $\eta^{(p)}$
which contains a spanning tree on $J \subseteq J^{(p)}$. But this is a
consequence of the basis exchange axiom: start with any tree $\Gamma$ on
$J^{(p)}$ compatible with $\eta^{(p)}$. If the induced forest $\Gamma|_J$
on $J$ is not connected, pick arbitrary endpoints $j,j' \in J$ that
belong to different connected components of $\Gamma|_J$. Then replace,
in $\Gamma$, an edge in the simple path from $j$ to $j'$ of smallest
$\eta^{(p)}$-weight by $jj'$ (which has the same weight).  This creates
a new $\Gamma$ compatible with $\eta^{(p)}$ such that $\Gamma|_J$ has
fewer connected components than before. Proceed in this fashion until
$\Gamma|_J$ is connected. See Figure \ref{fig:forest} for an illustration 
of this procedure. This concludes the proof that $\sigma$ is a
continuous section $\Trop(X) \to Z$ of the surjection $Z \to \Trop(X)$.
\end{proof}

\begin{figure}
\begin{center}
\includegraphics[width=.9\textwidth]{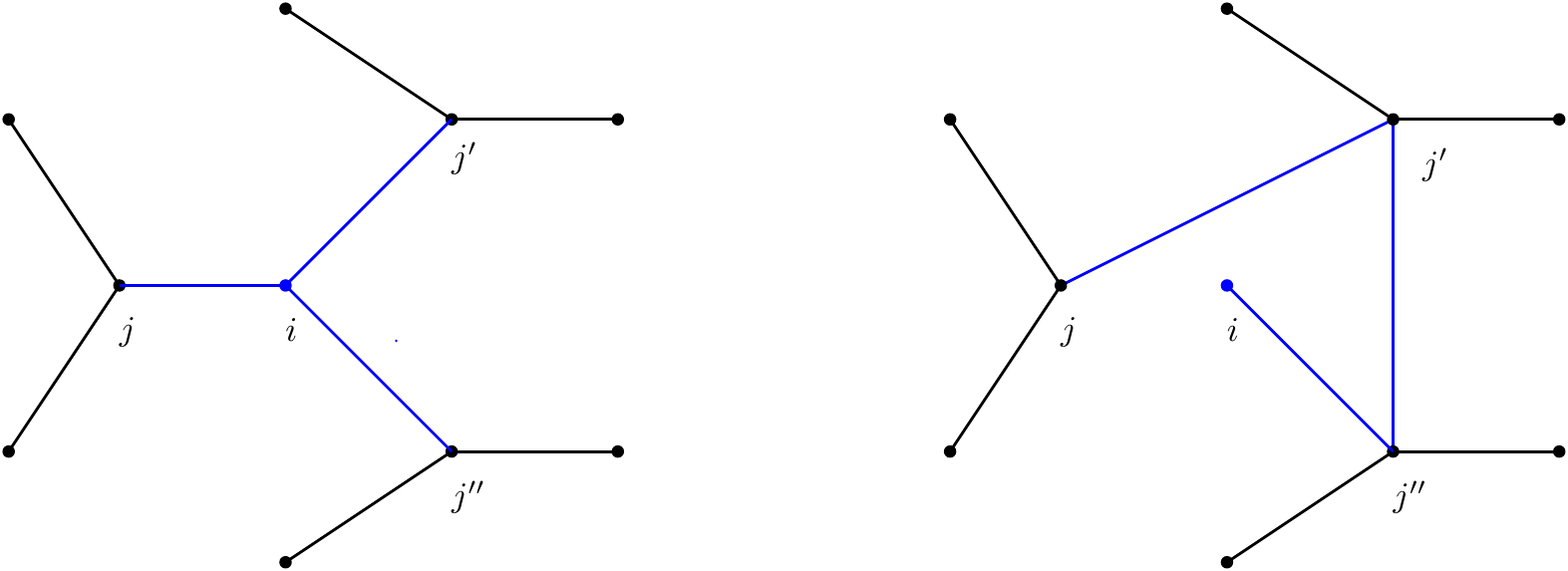}
\end{center}
\caption{Basis exchange in the case $ J^{(p)}\setminus J=\{i\}$, with
 $\eta^{(p)}_{ij}\le\eta^{(p)}_{ij'}\le \eta^{(p)}_{ij''}$.}\label{fig:forest}
\end{figure}

\begin{proof}[Proof of Theorem~\ref{thm:Grassmannian},
naturality and equivariance.]

In the previous proof, we decomposed $\xi$ as $A\tau + \eta$ by choosing
for $-\tau$ a point on the tropical line $\ell$ represented by $\xi$. This
point was obtained by stably intersecting $\ell$ with a hypersurface. By
verifying the conditions of Proposition~\ref{prop:Smearing}, we now
show that the chosen decomposition is, in fact, irrelevant for the
section $\sigma$.

First, if $\eta \in \Trop(Y^0)$ corresponds to a tropical projective
line $\ell$, then the $\tau \in \RR^m$ for which $A\tau + \eta$
lies in $\Trop(Y)$ are those for which $-\tau$ lies in $\ell$. Thus
$T_\eta=-\ell$ is connected. This settles the first requirement for
Proposition~\ref{prop:Smearing}.

Now let $\eta \in \Trop(Y^0)$ and $\tau \in \RR^m \setminus
\{0\}$ such that $[0,\tau]$
lies in $T_\eta$. For sufficiently small $\epsilon > 0$ there exists
a tree $\Gamma$ compatible with both $\eta$ and $\eta':=A\epsilon
\tau+\eta$. Indeed, the points in $\Trop(Y^0)$ compatible with any given
tree form (the support of) a closed, finite polyhedral complex. There
are finitely many trees, and they give finitely many polyhedral complexes that
together cover $\Trop(Y^0)$. But then $[\eta,A\tau+\eta]$ has an initial
segment entirely contained in one of these complexes. By a similar
argument, by shrinking $\epsilon$, we may moreover assume that there
exists an edge $i_0j_0$ in $\Gamma$ such that both
\[ \eta_{i_0j_0} \leq \eta_{ij} \text{ and }
\eta'_{i_0j_0} \leq \eta'_{ij}
\text{ for all } ij \in \Gamma \]
(and hence also for all $ij \in K_m \setminus \Gamma$). The edge
$i_0j_0$ cuts the tree $\Gamma$ into two connected components (see
Figure~\ref{tree}). Let $[m]=I \cup J$ be the vertex sets of these
connected components, with $i_0 \in I$ and $j_0 \in J$.  We claim
that $\tau_i=\tau_{i_0}$ for all $i \in I$ and $\tau_j=\tau_{j_0}$
for all $j \in J$. Indeed, pick $j \in J$ and consider the cycle
formed by $i_0,j,$ and then back along $\Gamma$ to $i_0$. We have
$\eta'_{i_0j}=\eta'_{i_0j_0}$, since this is the edge of $\Gamma$ in
said cycle with smallest $\eta'$-weight. On the other hand, we have
\[ \eta'_{i_0j}=\eta_{i_0j} + \epsilon(\tau_{i_0} +
\tau_{j}) = \eta_{i_0j_0} + \epsilon(\tau_{i_0} + \tau_{j})
\]
and 
\[ \eta'_{i_0j_0}=\eta_{i_0j_0} + \epsilon(\tau_{i_0} +
\tau_{j_0}). \]
This shows that $\tau_j=\tau_{j_0}$. Similarly, we find that for all $i
\in I$ we have $\tau_i=\tau_{i_0}$.

To construct $t$, we may assume that one of $\tau_{i_0},\tau_{j_0}$
is zero and the other positive---indeed, this can be achieved by
adding a multiple of the all-one vector to $\tau$, which can be mimicked
by multiplying $t$ with a scalar of the right valuation.  Without loss
of generality, suppose that $\epsilon\tau_{i_0}=:a$ is positive
and $\tau_{j_0}$ is zero. Then adding $\epsilon A\tau$ to $\eta$ has
the effect of increasing all $\eta_{ij}$ with $i,j \in I$ by $2a$,
keeping all $\eta_{ij}$ with $i,j \in J$ constant, and increasing all
$\eta_{ij}$ with $i \in I$ and $j \in J$ by $a$. As, by assumption,
the minimal-weight edge in $\Gamma$ remains the edge $i_0j_0$, the
minimal $\eta$-weight of an edge of $\Gamma$ with both vertices in $J$
must be at least $\eta_{i_0j_0}+a$.

Now let $w=\sigma_Y(\eta)$ as in the proof of
Theorem~\ref{thm:Linear} and let $y \in Y^0$ be the generic point. Its
coordinates are $x_{ij}=(y_i-y_j)_{ij}$ where the $y_i$ are variables. It
represents the subspace spanned by the rows of the matrix
\[ 
\begin{bmatrix}
y_1 & y_2 & \cdots & y_m\\
1   &  1  & \cdots &  1
\end{bmatrix}.
\]
A point $t$ sends $y$ into $Y^0$ if and only if it sends this subspace
to another subspace containing the all-one vector, hence if and only if
$t^{-1}$ lies in the subspace. Hence we make the {\em Ansatz}
\[ t_i=\frac{1}{c(y_i-y_{j_0})+d}, \]
where $c,d$ will be chosen from $K$. Then by definition of $w$, we have
\[ w(c(y_i-y_{j_0})+d)=\min\{v(c)+w(y_i-y_{j_0}),v(d)\}. \]
Now for each $i$ the expression
$y_i-y_{j_0}$ expands as a sum of the $x$-variables corresponding to the
edges on the path in $\Gamma$ from $i$ to $j_0$, and $w(y_i-y_{j_0})$
equals the minimal $\eta$-weight among these edges. For $i \in I$ this
equals $\eta_{i_0j_0}$, as this is the minimal-weight edge overall. Thus
we choose $c \in K$ such that $v(c)+\eta_{i_0j_0}=-a<0$. For $i \in J$
the minimal weight along the path is at least $a+\eta_{i_0j_0}$, so if
we choose a $d \in K$ with valuation $0$, then the denominator in the
{\em Ansatz} gets the right valuation for both $i \in I$ and $i \in J$.

We have thus constructed a $t \in \Gm(K(Y))$ with
$w(t)=\epsilon\tau$ and $\phi(t) y \in Y(K(Y))$. As all requirements
of Proposition~\ref{prop:Smearing} are met, we have constructed an
$\RR^m$-equivariant section $\Trop(X^0) \to Z^0$. This section agrees with
the restriction to $\Trop(X^0)$ of the section constructed in the previous
proof. Hence the continuous section constructed there is $\RR^m$-equivariant.
\end{proof}

\begin{re} \label{re:relationtoCueto}
In \cite{Cueto14} the setting is projective rather than affine. Theorem
1.1 and Corollary 7.3 from that paper follow from our theorem by applying
Lemmas~\ref{lm:projective} and~\ref{lm:quotient}, respectively.
\end{re}

\section{Rank-two matrices} \label{sec:RankTwo}

In this section we take $n=m\cdot p$ and consider $\AA^n$ with
coordinates $x_{ij}$ with $1 \leq i \leq m$ and $1 \leq j \leq p$.
Let $X\subset \A^{n}$ be the image of the polynomial map
$$
\psi: (\A^m)^2\times(\A^p)^2\to \A^{n}, \quad
 (y,y'),(z,z')\mapsto(y_iz'_j-y'_iz_j)_{1\le i\le m, 1\le
 j\le p}.
$$
It is the variety of matrices of rank at most two, and also the
affine cone over the variety of secant lines of the Segre embedding
of $\PP^{m-1}\times\PP^{p-1}$ in $\PP^{n-1}$. It is an irreducible
determinantal variety of dimension $2(m+p-2)$. 

Let $Y$ be the subvariety of $X$ defined as the image of 
$$
(\A^m\times\{\mathbf{1}\})\times(\A^p\times\{\mathbf{1}\})
$$
via $\psi$. Points of $Y$ have coordinates
$x_{ij}=(y_i-z_j)$ in $\A^{n}$, so that $Y$ is the zero locus of the
linear forms
\begin{equation} \label{eq:forms}
x_{ij}+x_{lk}=x_{ik}+x_{lj},\ 1\le i<l\le m, 1\le j<k\le p.
\end{equation}
Consider also the homomorphism of tori given by 
$$
\phi: \G_m^m\times\G_m^p\to\G_m^{n}, (t,s)\mapsto
(t_is_j)_{1\le i\le m, 1\le j\le p}.
$$
The corresponding $n\times(m+p)$-matrix $A$ has a one-dimensional
kernel spanned by $(1,\ldots,1,-1,\ldots,-1)$. We have
$X=\overline{\phi(\Gm^m\times\Gm^p)\cdot Y}$ (this can be proved using
equivariance of $\psi$, as in Section~\ref{sec:Grassmannian}), and
$$
\Trop(X^0)=A(\R^m \times \R^p)+\Trop(Y^0)
$$
where $X^0 \subseteq X$ and $Y^0 \subseteq Y$ are the loci where no
coordinate is zero. Let $\mu:\RR^{m+p} \times \Xan \to Z \subseteq \Xan$
be as constructed in Section~\ref{sec:Torus}. We will prove the following
theorem.

\begin{thm} \label{thm:RankTwo}
The surjection $\Xan \to \Trop(X)$, where $X$ is the variety of $m \times
p$-matrices of rank at most two, has a continuous, $\RR^m$-equivariant
section $\Trop(X) \to Z$.
\end{thm}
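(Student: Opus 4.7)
The plan is to follow the two-stage structure of Theorem~\ref{thm:Grassmannian}. The matroid on the variables $x_{ij}$ defined by the linear space $Y$ is the graphical matroid of the complete bipartite graph $K_{m,p}$ on vertex set $[m] \sqcup [p]$: its bases are spanning trees $\Gamma$, and a point $\eta \in \Trop(Y)$ is compatible with $\Gamma$ precisely when, for each non-tree edge $(i,j)$, the value $\eta_{ij}$ equals the minimum of $\eta_{kl}$ over the unique cycle in $\Gamma \cup \{(i,j)\}$. This description plays the role that spanning trees of $K_m$ did in Section~\ref{sec:Grassmannian}.

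The first step is to construct a continuous section $\sigma : \Trop(X) \to Z$ by stratifying $X$ by the pair $(I,J)$, with $I\subseteq[m]$ and $J\subseteq[p]$ the supports of the non-zero rows and columns, respectively. Each stratum $X_{I,J}$ is essentially the generic locus of the rank-$\le 2$ variety on $I\times J$-matrices, and $\Trop(X_{I,J}^0) = A_{I,J}(\R^I \oplus \R^J) + \Trop(Y_{I,J}^0)$ for the analogous $Y_{I,J}$. Given $\xi \in \Trop(X_{I,J}^0)$, pick references $i_0 \in I$, $j_0 \in J$ and set $\tau_i := \xi_{i j_0}$, $\rho_j := \xi_{i_0 j} - \xi_{i_0 j_0}$, so that $\eta := \xi - A_{I,J}(\tau,\rho)$ vanishes on the cross $(\{i_0\}\times J)\cup(I\times\{j_0\})$ and lies in $\Trop(Y_{I,J}^0)$. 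Define $\sigma(\xi) := \mu((\tau,\rho),\sigma_{Y_{I,J}}(\eta))$. Continuity across strata should then follow from a basis-exchange argument analogous to the Grassmannian proof, producing a spanning tree of $K_{m,p}$ compatible with a limiting sequence $\eta^{(\ell)}$ that restricts to a spanning tree on the limit $(I,J)$-subgraph.

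Independence of the choice of $(i_0,j_0)$ and $\R^{m+p}$-equivariance will then follow by invoking Proposition~\ref{prop:Smearing}. The first hypothesis---connectedness of $T_\eta := \{(\tau,\rho) : A(\tau,\rho)+\eta \in \Trop(Y^0)\}$---should follow from a direct polyhedral analysis. The second, and main, obstacle is to produce, for $[0,(\tau,\rho)] \subseteq T_\eta$ and sufficiently small $\epsilon$, a valued extension $(L,w_L)$ of $(K(Y),w)$ and $(t,s) \in \Gm^{m+p}(L)$ with $w_L((t,s)) = \epsilon(\tau,\rho)$ and $\phi(t,s)y \in Y^0(L)$ for $y$ the generic point of $Y$.

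For this I would use a M\"obius-type Ansatz. Identifying the generic point of $Y$ with a pair $(y,z)$ via $x_{ij}=y_i - z_j$, the simultaneous substitution $u \mapsto (au+b)/(pu+q)$ applied to both $y$ and $z$ yields
\[ \tilde{y}_i - \tilde{z}_j \;=\; \frac{(aq-bp)(y_i-z_j)}{(py_i+q)(pz_j+q)}, \]
so setting $t_i := \alpha/(py_i+q)$ and $s_j := \beta/(pz_j+q)$ with $\alpha\beta = aq - bp$ places $\phi(t,s)y$ back in $Y^0(L)$ for a suitable extension $L$. Letting $(i_0, j_0)$ be the minimum-weight edge of a common spanning tree $\Gamma$ compatible with $\eta$ and $\eta + A\epsilon(\tau,\rho)$, and $C_1 \ni i_0$, $C_2 \ni j_0$ the two components of $\Gamma\setminus\{(i_0,j_0)\}$, an argument parallel to the Grassmannian one should show that $[0,(\tau,\rho)]\subseteq T_\eta$ forces $\tau$ and $\rho$ to be constant on each of the four blocks $I_k := C_k\cap[m]$, $J_k := C_k\cap[p]$ up to the kernel $(\one,-\one)$ of $A$. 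One then selects $a,b,p,q,\alpha,\beta \in K$ with valuations such that the attained minima in $w(py_i+q)$ and $w(pz_j+q)$ realise $\epsilon\tau_i$ and $\epsilon\rho_j$ block-by-block. This bipartite case analysis, analogous to but more intricate than the Grassmannian Ansatz, is where I expect the main technical difficulty.
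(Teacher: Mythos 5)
There are two genuine gaps, one in each stage of your plan. In the first stage, your normalisation does not land in $\Trop(Y^0)$: tropical rank at most two imposes no condition on $2\times 2$ submatrices, so the matrix $\eta_{ij}=\xi_{ij}-\xi_{ij_0}-\xi_{i_0j}+\xi_{i_0j_0}$ can violate the four-cycle criterion. For instance, for the tropical-rank-two matrix $\xi$ with rows $(0,0,0),(0,-1,-1),(0,-1,-1)$ and $(i_0,j_0)=(1,1)$ one gets $\eta_{11}=\eta_{12}=\eta_{21}=0$ and $\eta_{22}=-1$, so the minimum in that $2\times 2$ block is attained only once. Choosing $(i_0,j_0)$ as a function of $\xi$ does not rescue the argument either, because you would then have to show that $\sigma(\xi)$ is independent of the chosen decomposition $\xi=A(\tau,\rho)+\eta$, which is exactly the statement you defer to Proposition~\ref{prop:Smearing} (see below). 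The paper avoids both problems by using the canonical normalisation $\tau:=\xi\odot(0,\ldots,0)^T$ (row minima) followed by column minima of $\xi-A(\tau,0)$; that this lands in $\Trop(Y_{IJ})$ is precisely \cite[Lemma 6.2]{Develin05}, and the $\RR^m$-equivariance of $\xi\mapsto\xi\odot(0,\ldots,0)^T$ is what yields the ($\RR^m$- but not $\RR^p$-) equivariance claimed in the theorem. Your continuity argument across the strata $X_{IJ}$, via basis exchange in the graphical matroid of $K_{m,p}$, does agree with the paper's.

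The second stage is the more serious problem: verifying the hypotheses of Proposition~\ref{prop:Smearing} for this $X$ is exactly what the authors attempted and found impossible, as they record in the remark following the proof. Your M\"obius Ansatz parametrises the locus of $(t,s)\in\Gm^m\times\Gm^p$ with $\phi(t,s)y\in Y^0$ by roughly four parameters, so the valuation vectors $w_L(t,s)$ realisable over valued extensions sweep out a polyhedral set of dimension at most four (modulo the lineality $\RR(\one,-\one)$); but for suitable $\eta\in\Trop(Y^0)$ the set $T_\eta$ has dimension much larger than four. Hence there are segments $[0,(\tau,\rho)]\subseteq T_\eta$ whose directions cannot be realised by any $(t,s)$, the second requirement of Proposition~\ref{prop:Smearing} fails, and the block-constancy analysis you envisage (the analogue of $\tau$ being constant on the two components of $\Gamma$ minus its minimal edge) cannot be carried out. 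Consequently naturality of the section and full $\RR^{m+p}$-equivariance are not obtained this way---note the theorem only asserts $\RR^m$-equivariance, and the paper explicitly leaves $\RR^{m+p}$-equivariance open.
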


Note that we do not claim that the section is also
$\RR^p$-equivariant. While this might be the case, our construction
below does not yield this.

\begin{figure}
\includegraphics{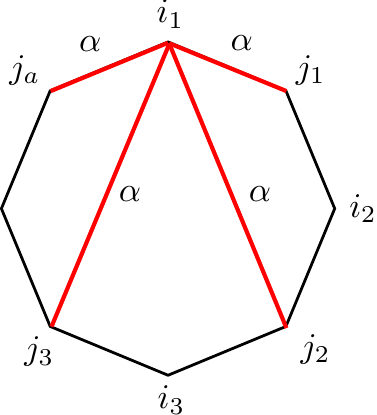}
\caption{Only four-cycles need to be tested for membership
of $\Trop(Y^0)$.} \label{fig:cycle}
\end{figure}

For the proof of this theorem, we need to understand the points in
$\Trop(X)$ and its tropical subvariety $\Trop(Y)$. By \cite[Corollary
3.8]{Develin05}, a matrix $\xi \in \RR^{m \times p}$ lies in $\Trop(X^0)$
if and only if it has tropical rank at most $2$, i.e., if and only if
all its $3 \times 3$-submatrices are tropically singular. This extends
directly to all of $\Trop(X)$.  To understand $\Trop(Y)$ note that
the matroid defined by $Y$ is the graphical matroid of the complete
bipartite graph $K_{m,p}$; this is immediate from the parameterisation
$x_{ij}=y_i-z_j$. In other words, $\eta \in \Rb^{m \times p}$ lies in
$\Trop(Y)$ if and only if along each cycle in $K_{m,p}$ the minimal
$\eta$-weight of an edge is attained at least twice.  We claim that this
is equivalent to the condition that in every $2 \times 2$-submatrix of
$\eta$ the minimal entry appears at least twice. Indeed, necessity of
the latter condition is obvious, as any $2 \times 2$-submatrix records
the weights of a $4$-cycle in $K_{m,p}$.  For sufficiency, assume that
the minimal $\eta$-weight in every $4$-cycle is attained at least twice,
and let $C$ be a general (simple, even) cycle in $K_{m,p}$. Label $C$
as $i_1-j_1-i_2-j_2-\cdots-i_a-j_a-i_1$, where the $i$s are in $[m]$ and
the $j$s are in $[n]$ and where $\alpha:=\eta_{i_1,j_1}$ is the minimal
weight of an edge in $C$.  Assume, for a contradiction, that all other
edges in $C$ have $\eta$-weight strictly larger than $\alpha$. Then
in the $4$-cycle $i_1-j_1-i_2-j_2-i_1$ the weight $\eta_{i_1,j_2}$
must be $\alpha$. Next, in the $4$-cycle $i_1-j_2-i_3-j_3-i_1$
the weight $\eta_{i_1,j_3}$ must also equal $\alpha$, etc. In this
manner we find that $\eta_{i_1,j_a}$ must also equal $\alpha$, a
contradiction. See Figure~\ref{fig:cycle} for an illustration. Armed
with this characterisation of $\Trop(Y)$ we will now prove the theorem.

\begin{proof}[Proof of Theorem~\ref{thm:RankTwo}.]
As in the proof of Theorem~\ref{thm:Grassmannian}, we use a stratification
of $X$. For $I \subseteq [m]$ and $J \subseteq [p]$ let $X_{IJ}$ denote
the locus in $X$ consisting of $x$ such that the rows of $x$ labelled
by $[m]\setminus I$ and the columns of $x$ labelled by $[p] \setminus J$
are identically zero and the submatrix $x[I,J]$ does not have identically
zero rows or columns. Let $Y_{IJ}$ denote the $(I,J)$-analogue of $Y$. It
is the image of $Y$ under the map sending all coordinates outside the
$[I,J]$-submatrix to zero.

For $\xi \in \Trop(X_{IJ})$ we let $\tau \in \Rb^m$ be the tropical
product $\xi \odot (0,\ldots,0)^T$, a point in the tropical convex hull
of the columns of $\xi$. Then we have $\tau_i \neq \infty$ if and only
if $i \in I$. Let $\xi' \in \Trop(X_{IJ})$ be the matrix obtained
from $\xi$ by subtracting $\tau_i$ from each $\xi_{ij}$ with $i \in I,
j \in J$. Then let $\rho \in \Rb^p$ be the tropical product $(0,\ldots,0)
\odot \xi'$, which records the minimal entry in each column of $\xi'$.
Let $\eta$ be the matrix obtained from $\xi'$ by subtracting $\rho_j$ from
each $\xi'_{ij}$ with $i \in I, j \in J$. By \cite[Lemma 6.2]{Develin05},
the matrix $\eta[I,J]$ has the property that in each of its $2 \times
2$-submatrices the minimal entry appears at least twice. By the discussion
preceding the proof, $\eta$ lies in $\Trop(Y_{IJ})$.

We set
\[ \sigma(\xi):=\mu((\tau,\rho),\sigma_{Y_{IJ}}(\eta)), \]
and claim that this depends continuously on $\xi$. To see this, let
$\xi^{(q)},\ q=1,2,\ldots$ be a sequence in $\Trop(X)$ converging to
$\xi \in \Trop(X_{IJ})$, and construct $\tau^{(q)}$ and $\rho^{(q)}$
and $\eta^{(q)}$ as above. After dropping finitely many initial terms,
we have $\xi^{(q)} \in \Trop(X_{I^{(q)}J^{(q)}})$ with $I^{(q)} \supseteq
I$ and $J^{(q)} \supseteq J$. For $i \in I$ and $j \in J$ we find that
$\tau^{(q)}_i \to \tau_i$ for $q \to \infty$ and also $\lim_{q \to
\infty} \rho^{(q)}_j = \rho_i$ and $\lim_{q \to \infty} \eta^{(q)}_{ij}
= \eta_{ij}$. We will not need the limits of the remaining
entries of $\rho^{(q)}, \eta^{(q)}, \eta^{(q)}$. 

Let $f$ be a $\Gm^{m+p}$-weight (i.e., multi-homogeneous) element of
$K[X]$. We have the same dichotomy as in the proof for the Grassmannian
case: either $f$ lies in the ideal generated by all variables $x_{ij}$
with $i \not \in I$ or $j \not \in J$, and in this case
\[ \sigma(\xi^{(q)})(f) \to \infty = \sigma(\xi)(f) \text{
for } q \to \infty; \]
or $f$ lies in the ring generated by the $x_{ij}$ with $i,j \in J$. In
the latter case, it suffices to show that
\[ \sigma_{Y_{I^{(q)}J^{(q)}}}(\eta^{(q)})(f) \to 
\sigma_{Y_{IJ}}(\eta)(f). \]
Proceeding as for the Grassmannian of $2$-spaces, we find that there exists,
for each $q$, a tree $\Gamma_q$ compatible with $\eta^{(q)}$ that induces
a tree (rather than a forest) on $I \cup J$. Using this tree, one finds
that the left-hand side equals $\sigma_{Y_{IJ}}(\tilde{\eta}^{(q)})(f)$
where $\tilde{\eta}^{(q)}$ is derived from $\eta^{(q)}$ by setting the
entries with $(i,j) \not \in I \times J$ equal to infinity.  Then the
convergence follows by continuity of $\sigma_{Y_{IJ}}$ and the fact that
$\tilde{\eta}^{(q)} \to \eta$ for $q \to \infty$. 

The map $\Trop(X) \to \Rb^m, \xi \to \xi \odot (0,0,\ldots,0)^T=\tau$
is $\RR^m$-equivariant, and this implies that $\sigma$ is
$\RR^m$-equivariant. But the construction $\xi \mapsto \rho$ is not
$\RR^p$-equivariant. 
\end{proof}

\begin{re}
The proof above is not as satisfactory as the proof for Grassmannians of
two-spaces in Section~\ref{sec:Grassmannian}, which used the technique
of Proposition~\ref{prop:Smearing} to prove that the defined section is
independent of the decomposition $\xi=A\tau+\eta$ and hence equivariant.
We have tried to mimick the proof for the Grassmannian, but failed
because for suitable $\eta \in \Trop(Y^0)$ the set $T_\eta$ can have
dimension much larger than the expected dimension four. This implies
that the second requirement in Proposition~\ref{prop:Smearing} cannot be
satisfied. Of course, this does not rule out the existence of alternative
techniques for proving $\RR^{m+p}$-equivariance.
\end{re}

\section{$A$-discriminants} \label{sec:ADiscriminants}

Linear spaces smeared around by tori, as discussed in
Section~\ref{sec:Smearing}, arise in the study of $A$-discriminants from
\cite{Gelfand94}. Let $\phi:\Gm^m \to \Gm^n$ be a torus homomorphism
with corresponding integer $n \times m$-matrix $A$, and let $V$ be the
closure in $\AA^n$ of the image of $\phi$, a toric variety. The linear
action of $\Gm^m$ on $\AA^n$ gives rise to an action on the dual space
$(\AA^n)^\vee$, given by a torus homomorphism $\phi^\vee:\Gm^m \to \Gm^n$
corresponding to the matrix $-A$.

Let $Y \subseteq (\AA^n)^\vee$ be the annihilator of the tangent space
$T_{\phi(1)} V$. Since $A$, when regarded as a matrix over $K$, is the
derivative of $\phi$ at $1$, $Y$ is the orthogonal complement of the
column space of $A$. For $t \in \Gm^m$, $\phi(t)$ maps $T_{\phi(1)}V$
into $T_{\phi(t)}V$, hence we find that $\phi^\vee(t)$ maps $V$
into the annihilator of $T_{\phi(t)}V$.  Thus the variety $X$
defined as the Zariski closure of the union of these annihilators
equals $\overline{\phi^\vee(\Gm^m) \cdot Y}$.  This is known as the
Horn uniformisation of the {\em dual variety} of $V$. It was used in
\cite{Dickenstein07} to characterise $\Trop(X^0)$ as
\[ \Trop(X^0) = -A\RR^m + \Trop(Y^0), \]
where, of course, the minus sign is only a reminder of the contragredience
of the action of $\Gm^m$ on $(\AA^n)^\vee$ and can also be left out.
This leads to the following fundamental problem.

\begin{prb}
For which torus homomorphisms $\phi:\Gm^m \to \Gm^n$ does the map from
the analytification of the dual variety $X$ of $V=\overline{\im \phi}
\subseteq \AA^n$ to $\Trop(X)$ admit a continuous, $\RR^m$-equivariant
section into the subset $Z \subseteq \Trop(X)$ defined in
Section~\ref{sec:Torus}?
\end{prb}

We do not have any general results at this point. Instead, we now consider
the very special case of {\em Cayley's hyperdeterminant}, and we stay
away from zero coordinates.

\begin{ex}
Let $n=2^3$ and use coordinates $x_{ijk},\ i,j,k \in \{0,1\}$ on $\AA^8$.
Let $m=3\cdot 2$ and use coordinates $t_i,u_j,v_k,\ i,j,k \in \{0,1\}$ on
$\Gm^6$. Let $\phi$ be the map $(t,u,v) \mapsto (t_iu_jv_k)_{i,j,k}$. Then
$V^0$ is the variety of rank-one tensors of format $2 \times 2 \times
2$. The dual variety $X$ is a hypersurface whose defining equation is
Cayley's hyperdeterminant
\begin{align*} \Delta&=
x_{000}^2 x_{111}^2 + 
x_{001}^2 x_{110}^2 + 
x_{010}^2 x_{101}^2 + 
x_{100}^2 x_{011}^2 \\
&- 
2x_{000}x_{001}x_{110}x_{111} - 
2x_{000}x_{010}x_{101}x_{111} - 
2x_{000}x_{011}x_{100}x_{111}\\&-
2x_{001}x_{010}x_{101}x_{110} - 
2x_{001}x_{011}x_{110}x_{100} - 
2x_{010}x_{011}x_{101}x_{100} \\
&+
4x_{000}x_{011}x_{101}x_{110} + 
4x_{001}x_{010}x_{100}x_{111}.
\end{align*}
The tropical variety of $X$ is known explicitly (though we will not use
this knowledge): modulo its four-dimensional lineality space it is a
$3$-dimensional fan in $4$-space. Intersecting with a $3$-dimensional
sphere yields a $2$-dimensional spherical polyhedral complex, which
consists of two nested tetrahedra glued by quadrangles along corresponding
edges; see Figure~\ref{fig:Tetra}. This is the spherical complex of
the normal fan of the bipyramid over a tetrahedron from \cite[Section
2]{Huggins08}.

\begin{figure}
\includegraphics[scale=.5]{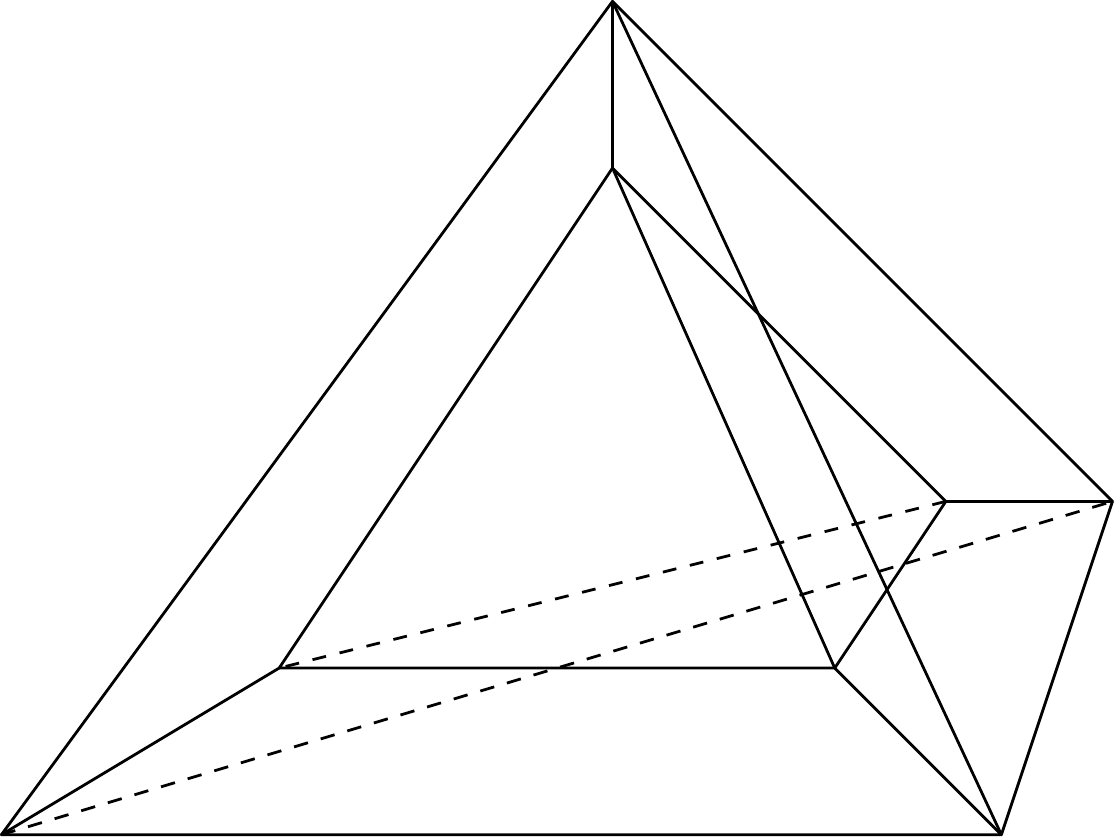}
\caption{The tropical variety of Cayley's hyperdeterminant has $8$
triangles and $6$ quadrangles.}
\label{fig:Tetra}
\end{figure}

The matrix $A$ sends $\tau=(\rho,\delta,\nu) \in \RR^6$ to the $2 \times 2
\times 2$-array with entries $(\rho_i+\delta_j+\nu_k)_{ijk}$. The kernel
of this map consists of vectors of the form $(a\one,b\one,c\one)$ with
$a+b+c=0$, so the column space $\im A$ has dimension $4$. It defines
the matroid on the vertices of the three-dimensional cube in which
independence is affine independence. Since the complement of any four
affinely independent vertices of the cube is again affinely independent,
this matroid is self-dual. So the dual matroid, which is the matroid of
the linear space $Y$, is the same matroid on $8$ elements. 

\begin{figure}
\begin{center}
\includegraphics{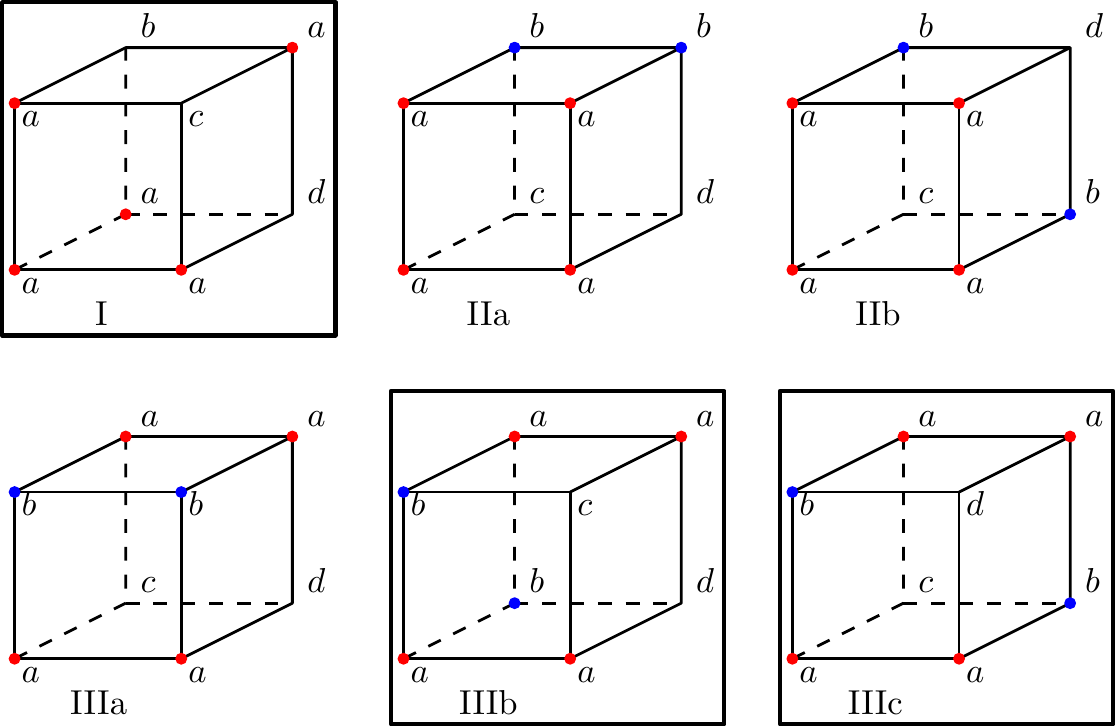}
\caption{The six orbits of maximal cones in $\Trop(Y^0)$, with $a \leq
b \leq c \leq d$.}
\label{fig:222}
\end{center}
\end{figure}

Up to symmetries of the cube, the seven-dimensional polyhedral
fan $\Trop(Y^0)$ has six maximal cones, and they are depicted in
Figure~\ref{fig:222}. Among these, the cones of type IIa, IIb, and IIIa
lie in $A\RR^6$ plus the union of the cones of type I, IIIb, IIIc. For
instance, take the array in type IIIa and add $(c-b)/2,0,(b-c)/2$ to the
positions with entries $b$,$a$,($c$ and $d$), respectively. The array
thus added lies in the column space of $A$, and the result is an array
in the boundary of type IIIb (with $b$ and $c$ replaced by $(b+c)/2$
and $d$ replaced by $d+(b-c)/2$).

Now let $C$ be a cone of type I, IIIb, or IIIc. Then the linear span of $C$
intersects $A\RR^6$ only in scalar multiples of the all-one array. This
follows from the fact that the span in $\RR^3$ of the differences of
vertices of the cube with the same label ($a$ or $b$) is all of $\RR^3$
(this is not true for the other types!). Thus on $A\RR^6 + C \subseteq
\Trop(X^0)$ we can define a section $\sigma_C$ into $\Trop(X^0)$
as follows: write $\xi$ as $A\tau + \eta$ with $\eta \in C$ and set
$\sigma(\xi):=\mu(\tau,\sigma_Y(\eta))$.  Note that, for any $c \in \RR$,
subtracting $(c\one,c\one,c\one)$ from $\tau$ and adding $3c$ times
the all-one array to $\eta$ yields the same value for $\sigma(\xi)$,
so that $\sigma$ is well-defined on $\AA \RR^6 + C$.

Next we verify that if $C'$ is a second cone of type I, IIIb, or IIIc,
then $\sigma_C$ and $\sigma_{C'}$ agree on the intersection $(A\RR^6 +
\sigma_C) \cap (A\RR^6 + C')$. This is immediate if
\[ (A\RR^6 + C) \cap (A\RR^6 + C') = A\RR^6 + (C \cap C') \]
as the recipes defining $\sigma_C$ and $\sigma_{C'}$ agree on the
right-hand side. For each choice of $C$ and $C'$, a vector witnessing that
the left-hand side is {\em strictly larger} than the right-hand side can
be found by solving a number of linear programs. If none of these linear
programs turns out to be feasible, then equality holds.  We have performed
this test for all choices of $C$ in the cones I, IIIb, IIIc, and $C'$ in
one of the orbits of these cones. Together with 
Proposition~\ref{prop:SmearingA} this proves the following theorem.

\begin{thm}
Let $X \subseteq K^{2 \times 2 \times 2}$ be the hypersurface defined by
Cayley's hyperdeterminant, equipped with the natural action of $\Gm^2
\times \Gm^2 \times \Gm^2$. Let $\Xan \to Z$ be the retraction defined
relative to this torus action. Then the surjection $Z^0 \to \Trop(X^0)$
has a continuous, $\RR^2 \times \RR^2 \times
\RR^2$-equivariant section. \hfill $\diamondsuit$
\end{thm}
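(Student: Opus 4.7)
The plan is to invoke Proposition~\ref{prop:SmearingA}. By the Horn uniformisation recalled above, $X=\overline{\phi^\vee(\Gm^6)\cdot Y}$ with $Y\subseteq(\AA^8)^\vee$ the linear space annihilating $T_{\phi(1)}V$, and $\Trop(X^0)=A\RR^6+\Trop(Y^0)$ (the sign in front of $A$ being irrelevant for the column space). It therefore suffices to produce a continuous, $\RR^6$-equivariant section of the surjection $\RR^6\times\Trop(Y^0)\to\Trop(X^0)$, $(\tau,\eta)\mapsto A\tau+\eta$; composing with $(\tau,\eta)\mapsto\mu(\tau,\sigma_Y(\eta))$ then yields the desired section into $Z^0$.

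The first step is to reduce from all six orbits of maximal cones in $\Trop(Y^0)$ (Figure~\ref{fig:222}) to the three orbits of types I, IIIb, and IIIc. The claim is that each point of a cone of type IIa, IIb, or IIIa can be rewritten as $A\tau+\eta$ with $\eta$ in the boundary of one of the three retained types; one checks this by exhibiting, for a general weighted array of type IIa, IIb, or IIIa, an explicit element of $A\RR^6$ whose subtraction lands the array in a type I, IIIb, or IIIc cone, as illustrated by the IIIa-to-IIIb translation indicated in the discussion preceding the theorem. After this reduction, $\Trop(X^0)=\bigcup_C(A\RR^6+C)$ where $C$ ranges over the finitely many maximal cones of the three retained types.

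The next step is to build a local section $\sigma_C$ on each $A\RR^6+C$. The crucial linear-algebraic input is that $(\operatorname{span} C)\cap A\RR^6=\RR\cdot\one$, which holds because in types I, IIIb, and IIIc the vertices of the cube sharing any common label already have affine span equal to all of $\RR^3$, forcing any element of $A\RR^6$ that is constant on each such subset to be a scalar multiple of the all-one array. Consequently, for every $\xi\in A\RR^6+C$ the decomposition $\xi=A\tau+\eta$ with $\eta\in C$ is unique up to the one-parameter ambiguity $(\tau,\eta)\mapsto(\tau+c(\one,\one,\one),\,\eta-3c\one)$. This ambiguity is absorbed by $\mu(\tau,\sigma_Y(\eta))$: for any $\Gm^6$-homogeneous $f\in K[X]$ of degree $d$ each term in the minimum defining $\mu$ shifts by $+3dc$ from the $\tau$ change, while $\sigma_Y(\eta-3c\one)(f)$ shifts by $-3dc$ (as every monomial on $Y$ has total $\eta$-degree $d$). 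Hence $\sigma_C(\xi):=\mu(\tau,\sigma_Y(\eta))$ is well-defined, continuous on $A\RR^6+C$ (any affine-linear right inverse of $(\tau,\eta)\mapsto A\tau+\eta$ on $\RR^6\times\operatorname{span}C$ exhibits continuity), and manifestly equivariant under translation of the $A\RR^6$-component.

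The final and most laborious step is to glue. It is enough to verify that
\[
(A\RR^6+C)\cap(A\RR^6+C')=A\RR^6+(C\cap C')
\]
for every pair of retained cones $C,C'$, since on the right-hand side the two recipes $\sigma_C$ and $\sigma_{C'}$ coincide by the uniqueness-up-to-ambiguity established above. Modulo the cube symmetry there are only finitely many such pairs, and since every $C$, $C'$, and $A\RR^6$ is a rational polyhedral cone, each equality reduces to the infeasibility of a single linear program searching for a point of the left-hand side missing from the right-hand side. This finite computer-assisted check is the main obstacle to a conceptual proof; once carried out, the local sections $\sigma_C$ patch into a continuous, $\RR^6$-equivariant section $\Trop(X^0)\to Z^0$, completing the proof via Proposition~\ref{prop:SmearingA}.
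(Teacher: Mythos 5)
Your proposal is correct and follows essentially the same route as the paper: Horn uniformisation plus Proposition~\ref{prop:SmearingA}, reduction to the cone types I, IIIb, IIIc, the observation that the span of each such cone meets $A\RR^6$ only in the all-one line (so the decomposition $\xi=A\tau+\eta$ is unique up to the one-parameter shift absorbed by $\mu(\tau,\sigma_Y(\eta))$), and gluing via the LP-verified identity $(A\RR^6+C)\cap(A\RR^6+C')=A\RR^6+(C\cap C')$. The only difference is cosmetic: you spell out the $\pm 3dc$ cancellation and the continuity of each local section slightly more explicitly than the paper does.
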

\end{ex}

\bibliographystyle{alpha}
\bibliography{diffeq,draismajournal}

\end{document}